\newtheorem{theorem}{Theorem}[section]
\newtheorem{lemma}[theorem]{Lemma}
\theoremstyle{definition}
\newcommand{\R}{\mathbb{R}}
\newcommand{\bE}{\boldsymbol{E}}
\newcommand{\bA}{\boldsymbol{A}}
\newcommand{\Id}{\boldsymbol{I}}
\newcommand{\bG}{\boldsymbol{G}}
\title[Interaction estimates ] 
 {New interaction estimates for the Baiti-Jenssen system}
\author[Laura Caravenna and Laura V. Spinolo]{}
\subjclass{Primary: 35L65}
 \keywords{conservation laws, regularity, shock formation, Schaeffer Theorem, counter-example}
 \email{laura.caravenna@math.unipd.it}
 \email{spinolo@imati.cnr.it}
\begin{document}
\maketitle

\centerline{\scshape Laura Caravenna}
\medskip
{\footnotesize
 \centerline{Dipartimento di Matematica,}
 \centerline{Universit\`a degli Studi di Padova,}
 \centerline{ via Trieste 63, 35121 Padova, Italy}
} 

\medskip

\centerline{\scshape Laura V. Spinolo}
\medskip
{\footnotesize
 \centerline{IMATI-CNR,}
 \centerline{via Ferrata 1, 27100 Pavia, Italy}
}

\bigskip


\begin{abstract}
We establish new interaction estimates for a system introduced by Baiti and Jenssen. These estimates are pivotal to the analysis of the wave front-tracking approximation. In a companion paper we use them to construct a counter-ex\-am\-ple which shows that Schaeffer's Regularity Theorem for scalar conservation laws does not extend to systems. The counter-example we construct shows, furthermore, that a wave-pattern containing infinitely many shocks can be robust with respect to perturbations of the initial data. The proof of the interaction estimates is based on the explicit computation of the wave fan curves and on a perturbation argument. 
\end{abstract}

\section{Introduction}
We deal with the system of conservation laws 
\begin{equation}
\label{e:clbaj}
 \partial_t U + \partial_x \big[ F_\eta (U) \big] =0.
\end{equation}
The unknown $U=U(t, x)$ attains values in $\R^3$:
\begin{align*}
& U : &[0, + &\infty[ \times\R &&\to &&\R^{3}\\
&  &&(t,x) && \mapsto & U&=
\begin{pmatrix}u\\v\\w\end{pmatrix}
\end{align*}
and the flux function $F_\eta: \R^3 \to \R^3$ is defined as 
\begin{equation}
\label{E:pertSyst}
F_{\eta}(U) : = 
\left(
\begin{matrix}
	\displaystyle{ 4 \big[ (v-1) u - w \big] + \eta p_1(U) \phantom{\int}} \\
	v^2 \\
	\displaystyle{4 \Big\{ {v (v-2)u} - (v-1) w \Big\}
	 + \eta p_3(U) } \\
	\end{matrix}\right). 
\end{equation}	
In the previous expression, the parameter $\eta$ attains values in the interval $[0, 1/4[$
and {to simplify the exposition we fix} the functions $p_1$ and $p_3$ by setting 
\begin{align}
\label{E:pq} 
 p_1(U) 
 &=2 u w- 2 u^{2} (v-1 ), \\
 p_3(U) 
 &= w^2-u^2 (v-2) v .
\end{align}
Note, however, that for some of the results discussed in the following the precise expression of the functions $p_1$ and $p_3$ is irrelevant.

System~\eqref{e:clbaj},\eqref{E:pertSyst} was introduced by Baiti and Jens\-sen in~\cite{BaJ, Jenssen} and it was used to construct an example of a Cauchy problem where the initial data have finite, but large, total variation and the $L^{\infty}$-norm of the admissible solution blows up in finite time. 
More recently, the authors of the present paper used the Baiti-Jenssen system~\eqref{e:clbaj} to exhibit an explicit counter-example which shows that Schaeffer's regularity result for scalar conservation laws does not extend to systems, see~\cite{CaravennaSpinolo:counterex}.  The counter-example we construct shows, furthermore, that a wave-pattern containing infinitely many shocks can be robust with respect to perturbations of the initial data.  We refer to~\S~\ref{s:motivation} in the present paper for a brief overview of these counter-examples. See also~\cite{Caravenna:HYP}.

This note aims at establishing new quantitative interaction estimates for the Baiti-Jenssen systems~\eqref{e:clbaj},\eqref{E:pertSyst}. 
The estimates we obtain are pivotal to the analysis of the so-called wave front-tracking approximation of the Cauchy problem obtained by coupling~\eqref{e:clbaj} with an initial datum $U(0, \cdot) = U_0$. We refer to~\cite{Bre,Dafermos,HoldenRisebro} for an extended discussion on the wave front-tracking approximation. Here we only mention that the wave front-tracking algorithm is based on the construction of a piecewise constant approximation of the Cauchy problem. Under suitable conditions on the initial datum $U_0$ and on the flux function $F_\eta$, one can show that the wave front-tracking approximation converges to an \emph{admissible solution} of the Cauchy problem, see in particular the analysis in~\cite{Bre}. In~\cite{CaravennaSpinolo:counterex} we construct wave front-tracking approximations of the Cauchy problems obtained by coupling~\eqref{e:clbaj} with suitable initial data. We then rely on the wave front-tracking approximation to establish qualitative properties of the limit solutions. In the following we do not consider all the possible interactions one has to handle when constructing the wave front-tracking approximation. We only discuss those that we encounter in~\cite{CaravennaSpinolo:counterex} and that cannot be handled by relying on straightforward considerations on the structure of the flux $F_\eta$.

Before going into the technical details, we make some further remarks. First, in the present note we fix a very specific system in the wider class considered in~\cite{BaJ}.  The motivation for this choice is twofold:
i) it simplifies the notation and ii) the analysis in the present note is sufficient for the applications 
in~\cite{CaravennaSpinolo:counterex}. Note, moreover, that in the proof of Lemma~\ref{l:22:heuristic} we use (although not in an essential way) the exact 
expression of the function $F_\eta$ evaluated at $\eta=0$. However, we are confident that our results can be extended to wider classes of systems of the type considered in~\cite{BaJ}.

Second, in this note the only occurrences where we explicitly use the precise expression of the functions $p_1$ and $p_3$  is in the results discussed in~\S~\ref{ss:baj}. More precisely, the proofs of Lemmas~\ref{l:22:heuristic} and~\ref{l:12:heuristic} both rely on a perturbation argument: we first show that our system verifies the statement of the lemmas in the case $\eta=0$ and then we show that the same holds provided $\eta$ is sufficiently small. The proof of Lemma~\ref{l:12:heuristic} is completely independent of the specific expression of  $p_1$ and $p_3$. In the perturbation argument in the proof of Lemma~\ref{l:22:heuristic} we use some results from~\S~\ref{ss:baj}, but we never directly use the specific expression of $p_1$ and $p_2$.

Third, the Baiti-Jenssen~\eqref{e:clbaj} system in not physical, in the sense that it does not admit strictly convex entropies, see~\cite{BaJ} for a proof. It is natural to wonder whether or not the results established in the present note can be extended to physical systems. Very loosely speaking, by combining Lemmas~\ref{l:12:heuristic} and~\ref{l:22:heuristic} below with the analysis in~\cite[\S3.1-3.2]{CaravennaSpinolo:counterex} we get the following statement. Under suitable conditions, the only waves generated at the interactions between two shocks are shock waves, or, more precisely, no rarefaction waves are generated at the interaction between two shocks. There are actually several physical systems that share this property: for instance, one can consider the $2\times2$ example discussed by DiPerna in~\cite[\S 5]{DiPerna} and assume that the data have sufficiently small total variation. We refer to~\cite[\S 4]{BressanCoclite} for the analysis of shock interactions for this system. 
On the other hand, a much more challenging question is whether or not there is any physical system that 
exhibit the same behaviors as those discussed in~\cite{BaJ,CaravennaSpinolo:counterex}. In other words, one can wonder whether or not a physical system can i) exhibit finite time blow up  
or ii) violate the regularity prescribed, for scalar conservation laws, by Schaeffer's Theorem. To the best of the authors' knowledge, the answers to the above questions is presently open.

We now give some technical details about the estimates we establish. First, we point out that the Baiti-Jenssen system~\eqref{e:clbaj} is strictly hyperbolic in the unit ball, which amounts to say that the Jacobian matrix $DF_\eta$ admits three real and distinct eigenvalues 
\begin{equation}
\label{e:sh}
 \lambda_1 (U) < \lambda_2 (U) < \lambda_3 (U)
\end{equation}
for every $U$ such that $|U| <1$. Also, if $\eta >0$ every characteristic field is genuinely nonlinear. In other words, 
let $\vec r_1, \dots, \vec r_3$ denote the right smooth eigenvectors associated to the eigenvalues $\lambda_1, \lambda_2, \lambda_3$. Then 
\begin{equation}
\label{e:genuinenonlinear}
 \nabla \lambda_i (U) \cdot \vec r_i (U) \ge c > 0 
\end{equation}
for some suitable constant $c>0$ and for every $i=1, 2, 3$ and $|U| <1$. In the following, we distinguish three families of shocks: we term a given shock 1-, 2- or 3-shock depending on whether the speed of the shock is close to $\lambda_1$, $\lambda_2$ or $\lambda_3$. 

We also point out that establishing interaction estimates for system~\eqref{e:clbaj} boils down to the following. Consider the so-called Riemann problem, namely the Cauchy problem obtained by coupling~\eqref{e:clbaj} with an initial datum in the form 
\begin{equation}
\label{e:riepuelle}
 U(0, x) : =
 \left\{
 \begin{array}{ll}
 U_\ell & x <0 \\
 U_r & x >0, \\
 \end{array}
 \right.
\end{equation}
where $U_\ell$, $U_r \in \R^3$ are constant states. The above problem admits, in general, infinitely many distributional solutions: we term~\emph{admissible} the solution constructed by Lax in the pioneering work~\cite{Lax}, see \S~\ref{ss:lax} for a brief overview. Establishing interaction estimates for~\eqref{e:clbaj} amounts to establish estimates on the admissible solution of the Riemann problem~\eqref{e:clbaj}-\eqref{e:riepuelle} in the case when $U_\ell$ and $U_r$ satisfy suitable structural assumptions. 
\begin{figure}
\begin{center}
\caption{Left: interaction between two 2-shocks. Right: interaction between a 2-shock and a 1-shock.}
\label{F:2interazione}
\includegraphics[width=\linewidth]{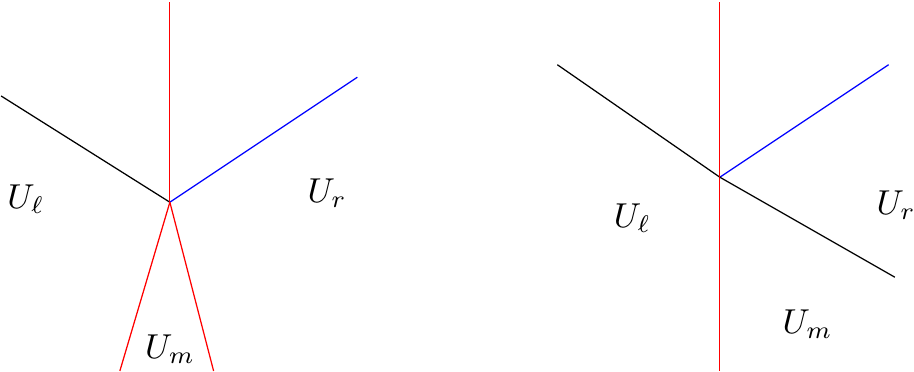}
\end{center}
\end{figure}

The first case we consider is the case of the interaction of two $2$-shocks, see Figure~\ref{F:2interazione}, left part. In other words, we assume that there is a state $U_m \in \R^3$ such that
\begin{itemize}
\item $U_\ell$ and $U_m$ are the left and the right states of a Lax admissible 2-shock, 
\item $U_m$ and $U_r$ are the left and the right states of a Lax admissible 2-shock and 
\item the shock between $U_\ell$ and $U_m$ has higher speed than the shock between $U_m$ and $U_r$. 
\end{itemize}
We now give an heuristic formulation of our interaction estimate and we refer to~\S~\ref{s:22} for the rigorous statement, which requires some technical notation.  Here we only point out that the \emph{strength} of a shock is a quantity defined in \S~\ref{ss:lax} which is proportional to the modulus of the difference between the left and the right state of the shock. 
\begin{lemma}
\label{l:22:heuristic} 
Fix a constant $a$ such that $0<a<1/2$ and set $U_{\sharp}: = (a, 0, -a)$. Consider the interaction between two 2-shocks and assume that the states $U_\ell$ and $U_r$ are sufficiently close to $U_{\sharp}$. If the strengths of the interacting 2-shocks are sufficiently small, then the admissible solution of the Riemann problem~\eqref{e:clbaj}-\eqref{e:riepuelle} is obtained by patching together {a 1-shock, a 2-shock and a 3-shock.} 
\end{lemma}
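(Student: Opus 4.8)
\emph{Strategy.} The plan is to follow the perturbation scheme announced in the introduction: first dispose of the case $\eta = 0$, where \eqref{e:clbaj} has a very special structure, and then transfer the conclusion to small $\eta > 0$ by continuity of the wave-fan curves. Throughout I would use Lax's construction of the admissible Riemann solution. Since \eqref{e:sh} provides three distinct eigenvalues, the three wave-fan curves issuing from $U_\ell$ are transversal, and the implicit function theorem yields, for $U_r$ close enough to $U_\ell$, a unique triple of signed strengths $(s_1, s_2, s_3)$ whose composition connects $U_\ell$ to $U_r$. The type of the $i$-th outgoing wave is read off from the sign of $s_i$ together with the behaviour of the $i$-th field, so the whole statement reduces to controlling these three signs.

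\emph{The case $\eta = 0$.} Here the second equation of \eqref{e:clbaj} decouples into the scalar Burgers equation $\partial_t v + \partial_x (v^2) = 0$, and a direct computation shows that the characteristic polynomial of $DF_0$ factors as $(2v - \lambda)(\lambda^2 - 16)$, so that near $U_\sharp$ one has $\lambda_1 \equiv -4 < \lambda_2 = 2v < \lambda_3 \equiv 4$. Consequently the $1$- and $3$-fields are linearly degenerate while the $2$-field is genuinely nonlinear (indeed $\nabla \lambda_2 \cdot \vec r_2 \equiv 2$). I would first translate the hypotheses into this language: the two interacting $2$-shocks force $v_\ell > v_m > v_r$, and $v_\ell > v_r$ is exactly the Lax condition for a single entropy $2$-shock. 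I would then write the three wave-fan curves through states near $U_\sharp$ explicitly — the $2$-Hugoniot locus being governed by the relation $s = v_+ + v_-$ substituted into the two remaining (linear) Rankine--Hugoniot equations for $u$ and $w$ — and solve the Riemann problem $(U_\ell, U_r)$. The outcome at $\eta = 0$ is a $1$-contact, a genuine $2$-shock, and a $3$-contact; the quadratic wave interaction produces $1$- and $3$-strengths of the order of the product of the incoming strengths. The essential point at this stage is to verify that these two strengths are nonzero and to record their signs.

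\emph{Perturbation to $\eta > 0$.} The eigenvalues, eigenvectors and hence the wave-fan curves of $F_\eta$ depend smoothly on $\eta$, and for $\eta > 0$ the $1$- and $3$-fields become genuinely nonlinear as in \eqref{e:genuinenonlinear}. The strengths $(s_1^\eta, s_2^\eta, s_3^\eta)$ solving the Riemann problem depend continuously on $\eta$ and on $(U_\ell, U_r)$; since at $\eta = 0$ the $2$-strength is strictly of shock type (an open condition) and the $1$- and $3$-strengths are nonzero, continuity keeps all three strengths away from $0$ for $\eta$ small and $(U_\ell, U_r)$ near $U_\sharp$. It then remains to confirm that the signs fixed at $\eta = 0$ fall on the shock side of the now genuinely nonlinear $1$- and $3$-curves.

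\emph{Main obstacle.} This last matching is the crux. Because the $1$- and $3$-fields are linearly degenerate at $\eta = 0$, the shock/rarefaction dichotomy is invisible there and no mere sign convention settles it: as $\eta \to 0^+$ the quantity $\lambda_i(U_-) - \lambda_i(U_+)$ controlling Lax admissibility vanishes, so the admissible window is $O(\eta)$-thin and one must expand $\nabla \lambda_i \cdot \vec r_i$ and the relevant Hugoniot locus to first order in $\eta$ and check that the interaction-generated contact strength lands, to leading order, on the admissible side. Securing this sign \emph{uniformly} for every $(U_\ell, U_r)$ in a full neighbourhood of $U_\sharp$ — rather than only at $U_\sharp$ itself — is where the special choice $U_\sharp = (a, 0, -a)$ and the quantitative information on the wave curves collected in \S\ref{ss:baj} should be decisive.
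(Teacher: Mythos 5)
Your overall architecture --- explicit analysis at $\eta=0$ followed by a perturbation in $\eta$ --- matches the paper's, and your reduction of the $2$-wave to the Burgers component is correct (Lemma~\ref{l:wavefan2} gives at once that the outgoing $2$-strength is $s_1+s_2<0$). But the two remaining steps each contain a genuine gap. The computation at $\eta=0$ that you defer (``verify that these two strengths are nonzero and record their signs'') is the actual core of the proof, and your prediction of its outcome is off: the outgoing $1$- and $3$-strengths are not of the order of the product of the incoming strengths but \emph{cubic}. The classical same-family interaction estimates (\cite[p.~133]{Bre}) already force all off-family output to be $O\bigl(|s_1s_2|(|s_1|+|s_2|)\bigr)$, so the whole issue is whether the cubic coefficient is nonzero with a definite sign. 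The paper settles this by computing the $2$-Hugoniot locus in closed form (Lemma~\ref{L:twoHugoniotlocus}) and expanding the composed wave curves, obtaining $(\sigma_0,\tau_0)=\tfrac{a}{32}(1,-1)\,s_1s_2(s_1+s_2)+o(|(s_1,s_2)|^3)$ at $U_\sharp=(a,0,-a)$; this is exactly where the special choice of $U_\sharp$ enters (at $a=0$ the leading term vanishes and the argument collapses). Without that computation nothing is proved.

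The perturbation step also does not work as you state it. You claim ``continuity keeps all three strengths away from $0$ for $\eta$ small and $(U_\ell,U_r)$ near $U_\sharp$,'' but the outgoing strengths tend to $0$ as $s_1,s_2\to0$, so they are not bounded away from zero; pointwise continuity in $\eta$ only yields a sign-preservation threshold $\eta(s_1,s_2)$ that degenerates as $(s_1,s_2)\to(0,0)$, whereas the lemma must hold for \emph{all} sufficiently small incoming strengths. The paper gets the needed uniformity by first showing that for every $\eta$ and $U_\ell$ the Taylor expansion of $\sigma_\eta,\tau_\eta$ in $(s_1,s_2)$ starts with the two third-order terms $s_1^2s_2$ and $s_1s_2^2$ (formulas~\eqref{E:vanish} and~\eqref{e:taylor2}), and then perturbing the third-order \emph{coefficients}, which depend Lipschitz-continuously on $(\eta,U_\ell)$; once these are within $C\varepsilon a$ of $\pm a/16$, the signs of $\sigma_\eta$ and $\tau_\eta$ follow uniformly in $(s_1,s_2)$. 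Finally, the ``main obstacle'' you single out --- an $O(\eta)$-thin admissibility window caused by the degeneracy of the $1$- and $3$-fields at $\eta=0$ --- is a misdiagnosis: by Lemma~\ref{l:wavefan}, for every $0<\eta<1/4$ the $1$- and $3$-wave fan curves are straight lines whose entire branches $\sigma<0$ and $\tau>0$ consist of Lax-admissible shocks, so admissibility is read off from the signs of $\sigma$ and $\tau$ alone, and no first-order expansion of $\nabla\lambda_i\cdot\vec r_i$ in $\eta$ is needed.
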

We remark that the relevant point in the above result is that the solution of the Riemann problem that we consider in the statement contains no rarefaction wave.

The second case we consider is the case of the interaction {between} a $1$-shock and a $2$-shock, see Figure~\ref{F:2interazione}, right part. In other words, we assume that there is a state $U_m \in \R^3$ such that 
\begin{itemize}
\item $U_\ell$ and $U_m$ are the left and the right states of a Lax admissible 2-shock, 
\item $U_m$ and $U_r$ are the left and the right states of a Lax admissible 1-shock.
\end{itemize}
The case of the interaction of a $3$-shock with a $2$-shock is analogous. 
We now give an heuristic formulation of our result and we refer to~\S~\ref{s:12} for the rigorous statement. 
\begin{lemma}
\label{l:12:heuristic}
Consider the interaction between a $1$-shock and a $2$-shock and assume both shocks have sufficiently small strength. Then the admissible solution of the Riemann problem~\eqref{e:clbaj}-\eqref{e:riepuelle} is obtained by patching together  {a 1-shock, a 2-shock and a 3-shock.}  Also, we establish quantitative bounds from above and from below on the strength of the outgoing shocks, see formulas~\eqref{e:sigmaprimotau}. 
\end{lemma}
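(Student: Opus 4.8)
The plan is to reduce the lemma to the analysis of the map that sends the two incoming shock strengths to the three outgoing wave strengths, and then to read off the signs and sizes of the latter. First I would recall Lax's construction (see~\S~\ref{ss:lax}): for $|U_\ell - U_r|$ small the admissible solution of the Riemann problem~\eqref{e:clbaj}-\eqref{e:riepuelle} is encoded by a unique triple $(\sigma_1, \sigma_2, \sigma_3)$ of small strengths such that $U_r = \Phi_3(\sigma_3; \Phi_2(\sigma_2; \Phi_1(\sigma_1; U_\ell)))$, where $\Phi_i(\cdot\,; \cdot)$ denotes the $i$-th Lax wave-fan curve; for $\eta > 0$ genuine nonlinearity~\eqref{e:genuinenonlinear} makes the $i$-wave a shock precisely when $\sigma_i$ has the sign for which $\lambda_i$ decreases across it. The incoming configuration prescribes $U_r = \Phi_1(s_1; \Phi_2(s_2; U_\ell))$ with $s_1, s_2 < 0$ (two Lax admissible shocks). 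Thus the whole statement becomes a statement about the solution map $(s_1, s_2) \mapsto (\sigma_1, \sigma_2, \sigma_3)$.

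Next I would establish existence, uniqueness and smooth dependence of this map. Since the differential of $(\sigma_1, \sigma_2, \sigma_3) \mapsto \Phi_3(\sigma_3; \Phi_2(\sigma_2; \Phi_1(\sigma_1; U_\ell)))$ at the origin is the matrix whose columns are $\vec r_1, \vec r_2, \vec r_3$, which is invertible by strict hyperbolicity~\eqref{e:sh}, the implicit function theorem gives a smooth map vanishing at $(s_1, s_2) = (0,0)$. The key structural observation is that if $s_2 = 0$ the data reduces to a single $1$-shock, forcing $(\sigma_1, \sigma_2, \sigma_3) = (s_1, 0, 0)$, and symmetrically if $s_1 = 0$. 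Taylor expansion then yields $\sigma_1 = s_1 + O(s_1 s_2)$, $\sigma_2 = s_2 + O(s_1 s_2)$ and $\sigma_3 = C\, s_1 s_2 + O(|(s_1, s_2)|^3)$ for a coefficient $C = C(U_\ell, \eta)$. Consequently the outgoing $1$- and $2$-waves inherit the negative sign of the incoming shocks and are themselves shocks for small strengths, with the two-sided bounds on their strengths following at once; the delicate point is the sign of the newly created $3$-wave.

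I would then compute $C$ explicitly at $\eta = 0$, where the flux is fully tractable. A direct computation gives the eigenvalues $\lambda_1 = -4$, $\lambda_2 = 2v$, $\lambda_3 = 4$, so that the first and third fields are \emph{linearly degenerate} while the second is genuinely nonlinear; the eigenvectors $\vec r_1 = (1, 0, v)$, $\vec r_3 = (1, 0, v-2)$ and the corresponding wave-fan curves are available in closed form (integral lines for the degenerate fields, and the explicit Hugoniot locus of the decoupled Burgers-type $v$-equation coupled to the linear $(u,w)$ block for the middle field). Solving the interaction identity $\Phi_3(\sigma_3; \Phi_2(\sigma_2; \Phi_1(\sigma_1; U_\ell))) = \Phi_1(s_1; \Phi_2(s_2; U_\ell))$ to second order, or equivalently reading off the third component of the commutator $[\vec r_1, \vec r_2]$ in the eigenbasis, yields $C(U_\ell, 0)$; I expect it to be strictly nonzero with the sign corresponding to a shock in a neighbourhood of $U_\ell$, which produces a two-sided bound $c_1 |s_1 s_2| \le |\sigma_3| \le c_2 |s_1 s_2|$. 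Since this computation involves only $F_0$, it is insensitive to the choice of $p_1$ and $p_3$.

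Finally I would close the perturbation argument: all wave-fan curves, hence the solution map and $C(U_\ell, \eta)$, depend smoothly on $\eta$, so the strict sign of $C(U_\ell, 0)$ and the strict bounds persist for $\eta$ small. The main obstacle, and the place where care is genuinely required, is that at $\eta = 0$ the $1$- and $3$-fields are linearly degenerate, so the shock/rarefaction dichotomy for the outgoing $3$-wave is only meaningful once $\eta > 0$; I would therefore verify admissibility directly through the Lax entropy inequality $\lambda_3(U_-) > \lambda_3(U_+)$, which is orientation independent and, at leading order in the strengths, reduces to a sign that is continuous in $\eta$ and determined by $C(U_\ell, 0)$ together with the orientation fixed by $\nabla \lambda_3 \cdot \vec r_3 > 0$. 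The fact that the two incoming waves are shocks rather than rarefactions is exactly what pins down $s_1, s_2 < 0$, hence $s_1 s_2 > 0$, and so fixes the sign of $\sigma_3$; matching this sign to shock admissibility uniformly for small $\eta$ is the crux of the proof.
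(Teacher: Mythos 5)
Your route is genuinely different from the paper's, and it can be made to work for the heuristic statement (waves of sufficiently small strength), but as written it leaves the crux unproved. The paper (Lemma~\ref{l:12}, \S~\ref{ss:etazero}--\S~\ref{ss:etanonzero}) never Taylor-expands the solution map: at $\eta=0$ it writes the Rankine--Hugoniot conditions of the incoming and outgoing $2$-shocks as an explicit \emph{linear} system $AX_0=Y$, see~\eqref{E:linsys}--\eqref{e:icszeroipsilon}, and solves it exactly, getting $\sigma_0'=\frac{2}{2-s}\,\sigma$ and $\tau_0'=\frac{(\gamma+4)}{(4-\gamma)(2-s)}\,s\sigma$ with $\gamma=2v_\ell+s$, see~\eqref{E:icszero}; the bounds~\eqref{e:sigmaprimotau} then hold with strict inequalities on the whole finite range $|s|,|\sigma|<1/4$. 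For $\eta>0$ it runs a fixed-point argument: the Rankine--Hugoniot conditions become $AX+\eta\mathcal F=Y$ and the solution is trapped in a ball of radius $k\eta|\sigma s|$ around $X_0$, the crucial input being the \emph{bilinear} estimate $|\mathcal F|\le C|\sigma s|$ of~\eqref{e:doppiastima}, deduced from the two separate bounds $|\mathcal F|\le C|\sigma|$ and $|\mathcal F|\le C|s|$. Your approach (implicit function theorem, vanishing of created waves on the axes $s_1=0$ and $s_2=0$, leading coefficient given by a commutator, explicit computation at $\eta=0$, smoothness in $\eta$) is intrinsically asymptotic: it can deliver the statement for small strengths, but not the quantitative version with the explicit constants $2$, $1/2$, $1/100$, $10$ on the fixed range $|s|,|\sigma|<1/4$, which is what the companion paper actually uses.

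The gaps are two. First, you never compute the interaction coefficient $C(U_\ell,0)$; you only ``expect it to be strictly nonzero''. That expectation is correct --- from the paper's exact formulas one reads $C(U_\ell,0)=\frac{v_\ell+2}{4-2v_\ell}>0$ for $|U_\ell|<1/2$ --- but this nonvanishing \emph{is} the lower bound in~\eqref{e:sigmaprimotau}, which the paper singles out as the main novelty of the lemma, so a proof must contain the computation (your commutator recipe would produce it, but only after computing the $2$-eigenvector at $\eta=0$, which is the messiest part of that route; the paper's linear-algebra detour avoids it entirely). Second, your remainder bookkeeping is too coarse for a \emph{lower} bound: an error of size $O(|(s_1,s_2)|^3)$ is not $o(|s_1s_2|)$ (take $|s_2|\ll s_1^2$), so the expansion $\sigma_3=Cs_1s_2+O(|(s_1,s_2)|^3)$ by itself does not imply $|\sigma_3|\ge c|s_1s_2|$. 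The fix is available from an observation you do make: since the created waves vanish identically when $s_1=0$ or when $s_2=0$, and this holds \emph{for every} $\eta$, one can factor $\sigma_3=s_1s_2\,g(s_1,s_2,\eta)$ with $g$ jointly continuous and $g(0,0,0)=C(U_\ell,0)>0$, and conclude on a neighbourhood of the origin. The same defect affects your closing step: ``smooth dependence on $\eta$'' only gives an error $O(\eta)$, which swamps $\sigma s$; one needs an error $O(\eta|\sigma s|)$, uniformly, i.e.\ exactly the factored form above or, equivalently, the paper's contraction estimate. Finally, two minor points: the paper orients $\vec r_3$ so that $\nabla\lambda_3\cdot\vec r_3<0$ (positive $\tau$ means shock), opposite to the convention you assume; and for $\eta>0$ admissibility of the outgoing $3$-wave follows directly from genuine nonlinearity (Lemma~\ref{l:eige}) plus the sign of $\tau'$, so no separate continuity argument on the Lax inequalities (which degenerate to equalities at $\eta=0$) is needed.
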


Note that the fact that the three outgoing waves are shocks follows from the analysis in~\cite{BaJ}. Also, the bound from above on the strength of the outgoing $3$-shocks follows from by now classical interaction estimates, see~\cite[Page~133, (7.31)]{Bre}: the main novelty in Lemma~\ref{l:12:heuristic} is that we have a new bound from below on the strength of the outgoing $3$-shock, see the left hand side of  formula~\eqref{e:sigmaprimotau}. This estimate is important for the analysis in~\cite{CaravennaSpinolo:counterex}.

This note is organized as follows. In \S~\ref{s:overview} we go over some previous results. In particular, in \S~\ref{s:motivation} we provide some motivation for studying the Baiti-Jenssen system~\eqref{e:clbaj} by describing two counter-examples that use it. In \S~\ref{ss:lax} we recall some results from~\cite{Lax} and in~\S~\ref{ss:baj} we apply these results to the Baiti-Jenssen system. In~\S~\ref{s:22} we discuss the interaction of two $2$-shocks and in \S~\ref{s:12} the interaction of a $1$-shock and a $2$-shock. 

\section{Overview of previous results}
\label{s:overview}
 For the reader's convenience, in this section we go over some previous results. More precisely:
 \begin{itemize}
 \item[\S~\ref{s:motivation}:] we discuss two counter-examples based on the Baiti-Jenssen system~\eqref{e:clbaj}: the original one in~\cite{BaJ} and a more recent one devised in~\cite{CaravennaSpinolo:counterex}.
 \item[\S~\ref{ss:lax}:] we follow the famous work by Lax~\cite{Lax} and we outline the construction of the solution of the Riemann problem. 
 \item[\S~\ref{ss:baj}:] we apply  Lax's construction to the Baiti-Jenssen system. 
 \end{itemize}
 \subsection{Counter-examples based on the Baiti-Jens\-sen system}
\label{s:motivation}
This paragraph is organized as follows:
 \begin{itemize}
 \item[\S~\ref{sss:counterex1}:] we discuss the counter-example in~\cite{BaJ}
 \item[\S~\ref{sss:counterex2}:] we discuss the counter-example in~\cite{CaravennaSpinolo:counterex}. 
\end{itemize}

Before dealing with the specific examples, we recall two main features of the Baiti-Jenssen system: first, it is strictly hyperbolic, namely~\eqref{e:sh} holds. Note that strict hyperbolicity is a standard hypothesis for results concerning systems of conservation laws, see~\cite{Dafermos}. Also, if $\eta >0$ every characteristic field is genuinely nonlinear, which means that~\eqref{e:genuinenonlinear} is satisfied for every $i=1, 2, 3$. This is a remarkable property
because loosely speaking systems where all the characteristic field are genuinely nonlinear are usually better behaved than general systems. For instance, the celebrated decay estimate by Ole{\u\i}nik~\cite{Oleinik}, which applies to scalar conservation laws with convex fluxes, has been extended to systems of conservation laws where all the characteristic field are genuinely nonlinear, see for instance the works by Glimm and Lax~\cite{GlimmLax}, by Liu~\cite{Liu_decay} and, more recently, by Bressan and Co\-lom\-bo~\cite{BressanColombo}, Bressan and Goatin~\cite{BressanGoatin} and Bressan and Yang~\cite{BressanYang}, while for balance laws we refer to Christoforou and Trivisa~\cite{ChristoforouTrivisa}.

\subsubsection{Finite time blow up of admissible solutions with large total variation}
\label{sss:counterex1}
Consider the general system of conservation laws
\begin{equation}
\label{e:cl}
 \partial_t U + \partial_x \big[ F(U) \big] =0,
\end{equation}
where the unknown $U(t, x)$ attains values in $\R^N$, the variables $(t, x) \in [0, + \infty[\times\R$ and the flux function $F:\R^N \to \R^N$ is smooth and strictly hyperbolic~\eqref{e:sh}. Consider furthermore the Cauchy problem obtained by coupling~\eqref{e:cl} with the initial condition 
\begin{equation}
\label{e:id}
U(0, \cdot) = U_0.
\end{equation}
 Under some further technical assumption on the structure of the flux, Glimm~\cite{Glimm} established existence of a global in time solution of the Cauchy problem provided that $\mathrm{TotVar \,} U_0$, the total variation of the initial datum, is sufficiently small. Under the same assumptions, Bressan and several collaborators established uniqueness results, see~\cite{Bre} for a detailed exposition. 

The requirement that the total variation $\mathrm{TotVar \,} U_0$ is small is highly restrictive, but necessary to obtain well-posedness results unless additional assumptions are imposed on the flux function $F$. Indeed, explicit examples have been constructed of systems and data $U_0$ where $\mathrm{TotVar \,} U_0$ is finite, but large, and the admissible solution blows up in finite time. In particular, in~\cite{BaJ} Baiti and Jenssen constructed an initial datum for system~\eqref{e:clbaj} such that the $L^{\infty}$-norm of the admissible solution blows up in finite time. The solution is admissible in the sense that it is piecewise constant and every shock is Lax admissible. For further examples of finite time blow up, see the references in~\cite{BaJ} and~\cite{Dafermos}.

\subsubsection{Schaeffer's Regularity Theorem does not extend to systems}
\label{sss:counterex2}
In~\cite{Schaeffer} Schaeffer established a regularity result which can be loosely speaking formulated as follows. 
Consider a scalar conservation law with strictly convex flux, 
namely equation~\eqref{e:cl} in the case when $U(t, x)$ attains real values and $F : \R \to \R$ is uniformly convex, i.e. $F''\ge c >0$ for some constant $c>0$. The work by Kru{\v{z}}kov~\cite{Kru} establishes existence and uniqueness of the so-called \emph{entropy admissible solution} of the Cauchy problem posed by coupling~\eqref{e:cl} and~\eqref{e:id}. 
It is known that, even if $U_0$ is smooth, the entropy admissible solution can develop shocks,   namely discontinuities that propagate in the $(t,x)$-plane. Schaeffer's Theorem states that, for a generic smooth initial datum, the number of shocks of the entropy admissible solution is locally finite. The word ``generic" is here to be interpreted in a suitable technical sense, which is related to the Baire Category Theorem, see~\cite{Schaeffer} for the precise statement. 

In~\cite{CaravennaSpinolo:counterex} we discuss whether or not Schaeffer's Theorem extends to systems of conservation laws where every characteristic field is genuinely nonlinear, name\-ly~\eqref{e:genuinenonlinear} holds. Note that the assumption that every characteristic field is genuinely nonlinear can be loose\-ly speaking regarded as the analogous for systems of the condition (which applies to scalar equations) that the flux is strictly convex. Indeed, regularity results for scalar equations with strictly convex fluxes have been extended to systems where every characteristic field is genuinely nonlinear: as we mentioned before, this is the case of Ole{\u\i}nik's~\cite{Oleinik} decay estimate, see for instance~\cite{BressanColombo,BressanGoatin,BressanYang,ChristoforouTrivisa,GlimmLax,Liu_decay} for possible extensions to systems. Also, the $SBV$ regularity result by Ambrosio and De Lellis~\cite{AmbrosioDeLellis}, which applies to scalar conservation laws with strictly convex fluxes, has been extended to systems where every characteristic field is genuinely nonlinear, see~\cite{AnconaNguyen,BiaCar,Dafermos_sbv}.

Despite the above considerations, in~\cite{CaravennaSpinolo:counterex} we exhibit an explicit example which rules out the possibility of extending Schaeffer's Theorem to systems of conservation laws where every characteristic field is genuinely nonlinear. More precisely, we construct a ``big" set of initial data such that the corresponding solutions of the Cauchy problems for the Baiti-Jenssen system~\eqref{e:clbaj} develop infinitely many shocks on a given compact set of the $(t, x)$-plane. The term 
``big" is to be again interpreted in a suitable technical sense, which is related to the Baire Category Theorem, see~\cite{CaravennaSpinolo:counterex} for the technical details.
\subsection{The Lax solution of the Riemann problem}
\label{ss:lax}
We consider a system of conservation laws~\eqref{e:cl} and we assume that $F: \R^3 \to \R^3$ is strictly hyperbolic~\eqref{e:sh} and that every characteristic field is genuinely nonlinear, namely~\eqref{e:genuinenonlinear} holds for $i=1,2, 3$.  Lemma~\ref{l:eige} below states that the Baiti-Jenssen system satisfies these conditions.
The Riemann problem is posed by coupling~\eqref{e:cl} with an initial datum in the form
\begin{equation}
\label{e:riep}
 U(0, x) : =
 \left\{
 \begin{array}{ll}
 U^- & x <0 \\
 U^+ & x >0, \\
 \end{array}
 \right.
\end{equation}
where $U^+$ and $U^-$ are given states in $\R^3$. In~\cite{Lax}, Lax constructed a solution of the Riemann problem~\eqref{e:cl}-\eqref{e:riep} under the assumptions that the states $U^+$ and $U^-$ are sufficiently close: we now briefly recall the key steps of the analysis in~\cite{Lax}.

We fix $i=1, 2, 3$ and $\bar U \in \R^3$ and we define the \emph{$i$-wave fan curve} through $\bar U$ by setting 
\begin{equation}
\label{e:wavefan}
 D_i [s, \bar U] : = 
 \left\{
 \begin{array}{ll}
 R_i [s, \bar U] & s \ge 0 \\
 S_i [s, \bar U] & s < 0 \\
 \end{array}
 \right.
\end{equation} 
In the previous expression, $R_i$ is the \emph{$i$-rarefaction curve} through $\bar U$ and $S_i$ is the \emph{$i$-Hugoniot locus} through $\bar U$. The $i$- rarefaction curve $R_i$ is the integral curve of the vector field $\vec r_i$, namely the solution of the Cauchy problem
\begin{equation}
\label{e:rarefaction}
 \left\{
 \begin{array}{ll}
 \displaystyle{\frac{d R_i}{ds }
 = \vec r_i \big( R_i \big)} \\
 \phantom{ciao} \\
 R_i [0, \bar U] = \bar U. \\
 \end{array}
 \right.
\end{equation} 
The $i$-th Hugoniot locus $S_i$ is the set of states that can be joined to $\bar U$ by a shock with speed close to $\lambda_i(\bar U)$. The $i$-Hugoniot locus $S_i$ is determined by imposing the Rankine-Hugoniot conditions.  We term the value $|s_i|$ \emph{strength} of the $i$-wave connecting the states $\bar U$ (on the left) and $D_i [s, \bar U]$ (on the right). Note that, owing to~\eqref{e:wavefan}, when $s_i>0$ the $i$-wave is a $i$-th rarefaction wave, when 
$s_i<0$ the $i$-wave is an $i$-shock satisfying the so-called \emph{Lax admissibility criterion}.  
The solution of the Riemann problem~\eqref{e:cl}-\eqref{e:riep} is computed by imposing 
$$
 U^+ = D_3 \Big[ s_3, D_2 \big[ s_2, D_1[s_1, U^- ] \big] \Big]
$$
and by using the Local Invertibility Theorem to solve for $(s_1, s_2, s_3)$. 
From the value of $(s_1, s_2, s_3)$ one can reconstruct a solution of the Riemann problem~\eqref{e:cl}-\eqref{e:riep}, see~\cite{Lax} for the precise construction. This solution is obtained by patching together rarefaction waves and shocks that satisfy the Lax admissibility criterion. In the following, we refer to this solution as the \emph{Lax solution} of the Riemann problem~\eqref{e:cl}-\eqref{e:riep}. 

\subsection{The wave fan curves of the Baiti-Jenssen system}
\label{ss:baj}
We collect in this paragraph some features of the Baiti-Jenssen system. For the proof, we refer to~\cite{BaJ,CaravennaSpinolo:counterex}. 

The first result states that in the unit ball the Baiti-Jenssen system 
is strictly hyperbolic whenever $0 \leq \eta < 1/4$. Also, when $\eta >0$ all the characteristic fields are genuinely nonlinear. Note that when $\eta =0$ this last condition is lost because two characteristic fields became linearly degenerate. See~\cite{BaJ} or~\cite{CaravennaSpinolo:counterex} for the explicit computations. 
\begin{lemma}
\label{l:eige}
Assume that $0 \leq \eta<1/4$ and that $U$ varies in the unit ball, {$|U|~<~1$}. Then the Baiti-Jenssen system with flux~\eqref{E:pertSyst} is strictly hyperbolic, namely~\eqref{e:sh} holds true. If we also have $\eta >0$ then every characteristic field is genuinely nonlinear, name\-ly~\eqref{e:genuinenonlinear} is satisfied for 
$i=1,2,3.$ 
\end{lemma}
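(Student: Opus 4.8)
The plan is to verify the two assertions of Lemma~\ref{l:eige} by direct computation of the Jacobian matrix $DF_\eta(U)$ and analysis of its characteristic polynomial. First I would compute $DF_\eta$ explicitly from~\eqref{E:pertSyst}, separating the contribution of the unperturbed flux $F_0$ (i.e.\ $\eta=0$) from the $\eta$-perturbation coming from $p_1$ and $p_3$ in~\eqref{E:pq}. The natural strategy is a perturbation argument consistent with the one announced in the introduction: handle $\eta=0$ first, then extend to small $\eta>0$ by continuity of eigenvalues and eigenvectors. For $\eta=0$, I expect the structure of $F_0$ to be simple enough that the eigenvalues of $DF_0(U)$ can be written down in closed form; the middle equation $\partial_t v + \partial_x (v^2)=0$ decouples in a useful way, since the $v$-component of the flux depends on $v$ alone, so one row/column of the Jacobian is degenerate and the characteristic polynomial should factor.

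Second I would establish strict hyperbolicity~\eqref{e:sh} throughout the unit ball $|U|<1$. After computing the three eigenvalues $\lambda_1(U)<\lambda_2(U)<\lambda_3(U)$, I would check that the discriminant of the characteristic polynomial stays strictly positive for all $|U|<1$ and all $0\le\eta<1/4$, so that the eigenvalues remain real and distinct. Because the eigenvalues depend continuously (indeed smoothly) on both $U$ and $\eta$, and the ball is compact after shrinking slightly, it suffices to control the gaps $\lambda_{i+1}-\lambda_i$ uniformly: I would show these gaps are bounded below by a positive constant at $\eta=0$ on $\overline{B(0,1)}$ and then note that a perturbation of size $O(\eta)$ with $\eta<1/4$ cannot close them. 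The precise threshold $1/4$ presumably arises exactly as the value at which a gap could first vanish, so the computation at $\eta=0$ dictates the admissible range of $\eta$.

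Third I would treat genuine nonlinearity~\eqref{e:genuinenonlinear} for $\eta>0$. Here the key observation, flagged in the introduction, is that at $\eta=0$ two of the characteristic fields are linearly degenerate, meaning $\nabla\lambda_i\cdot\vec r_i\equiv 0$ for those two fields, while genuine nonlinearity fails to hold with any positive constant. Thus the perturbation argument is more delicate than for hyperbolicity: the quantity $\nabla\lambda_i\cdot\vec r_i$ vanishes at $\eta=0$, so I must extract its leading-order behavior in $\eta$. Concretely I would expand $\lambda_i=\lambda_i^{(0)}+\eta\,\mu_i+O(\eta^2)$ and $\vec r_i=\vec r_i^{(0)}+O(\eta)$, and show that the first-order term yields $\nabla\lambda_i\cdot\vec r_i=\eta\,g_i(U)+O(\eta^2)$ with $g_i(U)$ bounded below by a positive constant on the unit ball. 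This is where the explicit form of $p_1,p_3$ enters, and it is the main obstacle: one must verify that the perturbation activates genuine nonlinearity in all three fields uniformly, i.e.\ that $g_i(U)\ge c_0>0$ for each $i$ and all $|U|<1$. Once $g_i$ is shown positive and bounded away from zero on the (closed) ball, choosing $\eta$ small makes $\eta g_i$ dominate the higher-order remainder, giving~\eqref{e:genuinenonlinear} with $c=c(\eta)>0$ for every $0<\eta<1/4$ after possibly verifying the full range rather than only small $\eta$ by a direct sign analysis of the closed-form expressions.

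I expect steps one and two to be essentially bookkeeping once the closed-form eigenvalues are in hand, whereas step three carries the real content: because genuine nonlinearity degenerates precisely in the limit $\eta\to 0$, the statement is not a soft continuity consequence but requires identifying and sign-controlling the first non-vanishing $\eta$-order of $\nabla\lambda_i\cdot\vec r_i$ uniformly over the ball.
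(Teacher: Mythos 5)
Your starting point is sound and matches the structure of the actual argument: the second row of $DF_\eta$ is $(0,2v,0)$, so $\lambda_2=2v$ is an exact eigenvalue for every $\eta$, the remaining two eigenvalues come from the $2\times 2$ block in the $(u,w)$ variables, and at $\eta=0$ that block has constant eigenvalues $\mp 4$, so the extreme fields are linearly degenerate there. Note, however, that the paper itself does not prove Lemma~\ref{l:eige}: it quotes it from \cite{BaJ,CaravennaSpinolo:counterex}, where it is established by a fully explicit computation valid for \emph{all} $\eta$, not by perturbation off $\eta=0$. This is where your proposal has a genuine gap. The lemma asserts strict hyperbolicity for every $\eta\in[0,1/4)$ and genuine nonlinearity for every $\eta\in(0,1/4)$, whereas your two perturbative steps (spectral gaps cannot close under a small perturbation; $\nabla\lambda_i\cdot\vec r_i=\eta\,g_i(U)+O(\eta^2)$ with the first-order term dominating) can only ever deliver ``for $\eta$ sufficiently small.'' In particular, the threshold $1/4$ is \emph{not} ``dictated by the computation at $\eta=0$'': no information at $\eta=0$ can tell you how large $\eta$ may be taken, and the $O(\eta^2)$ remainder in your genuine-nonlinearity expansion can only be controlled uniformly if you already understand the eigenvector fields at finite $\eta$. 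Your closing remark that one may need ``a direct sign analysis of the closed-form expressions'' is the actual proof; once those closed forms are available the perturbative scaffolding is superfluous.

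Concretely, one checks by direct computation that for every $\eta$ the vectors $\vec r_1=(1,0,v)^T$ and $\vec r_3=(1,0,v-2)^T$ are \emph{exact} eigenvectors of $DF_\eta(U)$ (consistently with \eqref{e:wavefan1}--\eqref{e:wavefan2}: the 1- and 3-wave fan curves are straight lines precisely because these eigenvector fields are constant along them), with eigenvalues
\begin{equation*}
\lambda_1=-4+2\eta\bigl[u(2-v)+w\bigr],\qquad \lambda_2=2v,\qquad \lambda_3=4-2\eta\bigl[uv-w\bigr].
\end{equation*}
On $|U|<1$ and $0\le\eta<1/4$ one has $|\lambda_1+4|\le 8\eta<2$ and $|\lambda_3-4|\le 4\eta<1$, while $\lambda_2\in(-2,2)$; hence $\lambda_1<-2<\lambda_2<2<\lambda_3$, which is \eqref{e:sh} on the whole stated range. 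Genuine nonlinearity is then exact, with no remainder to control: $\nabla\lambda_1\cdot\vec r_1=2\eta(2-v)+2\eta v=4\eta$, $\nabla\lambda_3\cdot\vec r_3=-2\eta v+2\eta(v-2)=-4\eta$ (negative because of the orientation convention the paper states for $\vec r_3$), and $\nabla\lambda_2\cdot\vec r_2=2$ with the normalization of Lemma~\ref{l:wavefan2}. This also corrects a small misstatement in your third step: the middle field needs no ``activation'' by the perturbation, since it is genuinely nonlinear already at $\eta=0$; only the two extreme fields have their genuine-nonlinearity constants proportional to $\eta$, exactly as you anticipated qualitatively.
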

We now discuss the structure of the wave fan curves. We start by 
giving the explicit expression of the 1- and the 3-wave fan curve. 
 In the statement of the following result, we denote 
by $(\bar u, \bar v, \bar w)$ the components of the state $\bar U \in \R^3$. 
\begin{lemma}
\label{l:wavefan}
Consider the flux function~\eqref{E:pertSyst}, assume that $0< \eta <1/4$ and fix $\bar U \in \R^3$ such that 
$|\bar U| <1$. Then the following properties 
hold true. 
\begin{itemize}
\item[i)] The 1-wave fan curve $D_1 [\sigma, \bar U]$ is a straight line in the 
plane $v = \bar v$, more precisely
\begin{align}
\label{e:wavefan1}
 D_1 [\sigma, \bar U] &= \bar U + \sigma \vec r_1 (\bar U), 
 \\\notag
 & \text{where} \quad 
 \vec r_1 (\bar U) 
 = 
 \left(
 \begin{array}{cc}
 1\\
 0\\
 \bar v\\
 \end{array}
 \right). 
\end{align}
Note that $\vec r_1 (\bar U)$ is the first eigenvector of the Jacobian
matrix $DF (\bar U)$. Also, the states $\bar U$ (on the left)
and $D_1 [\sigma, \bar U] $ (on the right) are connected by a wave which is
\begin{itemize}
\item a 1-rarefaction wave when $\sigma >0$,
\item a Lax admissible 1-shock when $\sigma <0$.
\end{itemize}
\item[ii)] The 3-wave fan curve $D_3 [\tau, \bar U]$ is a straight line in the 
plane $v = \bar v$, more precisely
\begin{align}
\label{e:wavefan2}
 D_3 [\tau, \bar U] &= \bar U + \tau \vec r_3 (\bar U), 
 \\\notag
 & \text{where} \quad 
 \vec r_3 (\bar U) 
 = 
 \left(
 \begin{array}{cc}
 1\\
 0\\
 \bar v- 2 \\
 \end{array}
 \right). 
\end{align}
The vector $\vec r_3 (\bar U)$ is the third eigenvector of the Jacobian
matrix $DF (\bar U)$. Also, the states $\bar U$ (on the left)
and {$D_3 [\tau, \bar U] $} (on the right) are connected by a wave which is
\begin{itemize}
\item a 3-rarefaction wave when $\tau <0$,
\item a Lax admissible 3-shock when $\tau >0$.
\end{itemize}
\end{itemize}
\end{lemma}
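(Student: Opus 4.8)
The plan is to compute everything explicitly at $\eta = 0$, where the flux simplifies dramatically, and then argue that the structural features persist for $0 < \eta < 1/4$ by strict hyperbolicity (Lemma~\ref{l:eige}). First I would write down the Jacobian $DF_0(\bar U)$ of the unperturbed flux. Setting $\eta = 0$ in~\eqref{E:pertSyst}, the three components are $4[(v-1)u - w]$, $v^2$, and $4[v(v-2)u - (v-1)w]$. Differentiating, the middle row of $DF_0$ is $(0, 2\bar v, 0)$, so the $v$-equation decouples: its characteristic speed is $2\bar v$ and any wave in the first or third family must leave $v$ unchanged. This already forces the $1$- and $3$-wave fan curves to lie in the plane $v = \bar v$, which is half of what the lemma claims. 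Concretely, on that plane the relevant block of $DF_0$ acting on $(u,w)$ is the constant matrix $\bigl(\begin{smallmatrix} -4 & -4 \\ 4\bar v(\bar v - 2) & -4(\bar v - 1)\end{smallmatrix}\bigr)$ evaluated with $v = \bar v$; its eigenvalues and eigenvectors give $\lambda_1, \lambda_3$ and $\vec r_1, \vec r_3$.

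Next I would verify that $\vec r_1(\bar U) = (1, 0, \bar v)^{\mathsf T}$ and $\vec r_3(\bar U) = (1, 0, \bar v - 2)^{\mathsf T}$ are genuine eigenvectors by direct substitution, reading off the corresponding eigenvalues. The key simplification is that both eigenvectors have vanishing $v$-component and, crucially, are \emph{constant along their own integral curves}: moving from $\bar U$ in the direction $\vec r_1(\bar U)$ keeps $v \equiv \bar v$ fixed, and $\vec r_1$ depends on the base point only through $v$, so along the curve the eigenvector never changes. Hence the rarefaction curve~\eqref{e:rarefaction} is literally the straight line $\bar U + \sigma \vec r_1(\bar U)$, establishing~\eqref{e:wavefan1}; the same argument gives~\eqref{e:wavefan2}. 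This is the crux of why the wave fan curves are affine rather than merely tangent to the eigenvector at $s = 0$.

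For the shock parts I would check that these same straight lines satisfy the Rankine--Hugoniot conditions, so the rarefaction line and the Hugoniot locus coincide, making the whole wave fan curve $D_i$ a single straight segment. Because $v$ is constant across such a shock, one computes the flux difference on the two-dimensional $(u,w)$ block and verifies that $F_0(\bar U + \sigma \vec r_i) - F_0(\bar U)$ is parallel to $\vec r_i$; dividing by $\sigma$ yields a constant shock speed, which is exactly the eigenvalue, so Rankine--Hugoniot holds identically in $\sigma$. The Lax admissibility sign conventions (1-shock for $\sigma < 0$, 3-shock for $\tau > 0$) then follow from genuine nonlinearity and the relation between the sign of the parameter and whether the relevant eigenvalue increases or decreases across the discontinuity, matching the orientation fixed in~\eqref{e:wavefan}.

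The main obstacle, and the point requiring care, is passing from $\eta = 0$ to $0 < \eta < 1/4$: the perturbation terms $\eta p_1, \eta p_3$ are genuinely nonlinear in $U$ and in principle could bend the wave fan curves away from straight lines. I expect the resolution is that the lemma as stated is an \emph{exact} algebraic identity rather than a perturbative statement — that is, one checks directly that $(1, 0, \bar v)^{\mathsf T}$ and $(1, 0, \bar v - 2)^{\mathsf T}$ remain eigenvectors of the full $DF_\eta$ and that the corresponding integral curves stay straight even for $\eta > 0$, because the chosen $p_1, p_3$ in~\eqref{E:pq} are engineered so that their gradients along these directions preserve the structure. I would therefore recompute the Jacobian with the $\eta$-terms retained, confirm the eigenvector claims by substitution, and verify that along the line $\bar U + \sigma \vec r_i(\bar U)$ the eigenvector condition and the Rankine--Hugoniot condition both continue to hold exactly; if instead only the eigenvector-at-the-base-point property survives, I would fall back on showing the $v$-component is still conserved (the second row of $DF_\eta$ is unaffected by $p_2$ since no $p_2$ appears) and that the decoupled $(u,w)$ dynamics remain affine. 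Establishing this exact persistence, rather than a mere small-$\eta$ estimate, is where the specific form of $p_1$ and $p_3$ does its work.
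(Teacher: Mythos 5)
Your plan is sound and is, in substance, the intended argument: the paper itself gives no proof of Lemma~\ref{l:wavefan} (it defers to \cite{BaJ,CaravennaSpinolo:counterex}), but the direct verification you outline does go through as an \emph{exact} identity for every $\eta$ --- one checks by substitution that $(1,0,\bar v)^{\mathsf T}$ and $(1,0,\bar v-2)^{\mathsf T}$ are eigenvectors of the full Jacobian $DF_\eta(\bar U)$ with eigenvalues $-4+2\eta(\bar w-\bar u\bar v+2\bar u)$ and $4+2\eta(\bar w-\bar u\bar v)$, that these vector fields depend on the base point only through $v$ (which is constant along their integral curves, so the rarefaction curves are straight lines), and that along $\bar U+\sigma\vec r_1(\bar U)$ the Rankine--Hugoniot conditions hold identically with speed $-4+2\eta(\bar w-\bar u\bar v+2\bar u+\sigma)$, the arithmetic mean of the endpoint eigenvalues, so Lax admissibility reduces to $\sigma<0$ (and to $\tau>0$ for the third family, since $\nabla\lambda_3\cdot\vec r_3=-4\eta<0$ with the orientation fixed in the paper). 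Your only wobble is the opening suggestion that the structure might ``persist by strict hyperbolicity'' from $\eta=0$ --- no perturbative or continuity argument can yield exactly straight wave fan curves --- but you correctly retract this in your final paragraph in favour of the exact computation with the $\eta$-terms retained, which is the step where the specific choice of $p_1,p_3$ in~\eqref{E:pq} is genuinely used.
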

Note that, for the 3-wave fan curve, the \emph{positive} values of $\tau$ correspond to shocks, the \emph{negative} values to rarefaction waves. This is the contrary with respect to~\eqref{e:wavefan} and it is a consequence of the fact that we use the same notation as in~\cite{BaJ,CaravennaSpinolo:counterex} and we choose the orientation of $\vec r_3$ in such a way that when $\eta>0$ condition~\eqref{e:genuinenonlinear} is replaced by the opposite inequality
$$
 \nabla \lambda_3 \cdot \vec r_3 < 0. 
$$
We now turn to the structure of the 2-wave fan curve. In the following statement, we use the notation 
\[
 U^{-}=
\begin{pmatrix}u^{-}\\v^{-}\\w^{-}\end{pmatrix}, \qquad
 U^{+}=
\begin{pmatrix}u^{+}\\v^{+}\\w^{+}\end{pmatrix}.
\]
Also, we consider \emph{entropy admissible solutions} of scalar conservation laws, in the Kru{\v{z}}kov~\cite{Kru} sense. 
\begin{lemma}
\label{l:wavefan2}
Assume that $U$ is a Lax solution of the Riemann problem~\eqref{e:cl}-\eqref{e:riep}. Then the second component $v$ is an entropy admissible solution of the Cauchy problem
\begin{equation}
\label{e:scariep}
\left\{
\begin{array}{lll}
\partial_t v + \partial_x [v^2] =0 \\
v(0, x) =
\left\{
\begin{array}{ll}
v^- & x <0 \\
v^+ & x >0. \\
\end{array}
\right.
\end{array}
\right.
\end{equation}
Also, we can choose the eigenvector $\vec r_{2}$ and the parametrization of the 2-wave fan curve $D_2 [s, \bar U]$ in such a way that the second component of $D_2 [s, \bar U]$ is exactly $\bar v +s$. 
\end{lemma}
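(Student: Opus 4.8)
The plan is to prove Lemma~\ref{l:wavefan2} by exploiting the special structure of the Baiti-Jenssen flux~\eqref{E:pertSyst}, specifically the fact that the second equation of the system decouples. Looking at $F_\eta$, the second component of the flux is simply $v^2$, independent of $u$, $w$, and $\eta$. Therefore, if $U = (u, v, w)$ is any weak solution of~\eqref{e:clbaj}, its second component $v$ automatically satisfies the scalar conservation law $\partial_t v + \partial_x[v^2] = 0$ in the distributional sense, with the Riemann initial data $v^-$, $v^+$ inherited from~\eqref{e:riep}. So the first claim is essentially immediate at the level of distributions; the real content is the word \emph{entropy admissible}.

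First I would establish the decoupling rigorously: since $U$ is a Lax solution, it is piecewise smooth, built by patching rarefaction waves and Lax-admissible shocks, and across each constituent wave the Rankine-Hugoniot / integral-curve relations of the full system hold. I would verify that along the $1$- and $3$-waves the $v$-component is constant (this is visible from Lemma~\ref{l:wavefan}, where $\vec r_1$ and $\vec r_3$ both have zero second entry, so $D_1$ and $D_3$ lie in the plane $v = \bar v$), hence these waves are invisible to the scalar equation. The only wave that changes $v$ is the $2$-wave, and there the change is exactly $v^- \to v^+$. Thus $v$ is a piecewise constant/piecewise smooth distributional solution of the scalar Burgers-type equation $\partial_t v + \partial_x[v^2]=0$ with the correct Riemann data.

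Next, for the entropy admissibility, I would argue that each $2$-wave in the Lax solution is \emph{Lax admissible} for the full system, and I would translate this into the scalar Oleinik/Kru\v{z}kov entropy condition for the flux $f(v) = v^2$. Since $f(v)=v^2$ is uniformly convex, the Lax entropy condition $\lambda_2(U^-) \ge \text{(shock speed)} \ge \lambda_2(U^+)$ for an admissible $2$-shock of the system should reduce, upon restricting to the $v$-component, exactly to $f'(v^-) = 2v^- \ge$ shock speed $\ge 2v^+ = f'(v^+)$, which is precisely the scalar entropy condition $v^- \ge v^+$ across a shock. Likewise a $2$-rarefaction of the system corresponds to $v^- < v^+$, giving the continuous entropy solution. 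Because for a convex scalar flux the entropy-admissible Kru\v{z}kov solution of a Riemann problem is unique and is characterized by exactly these per-discontinuity Lax conditions, the $v$-component of the Lax solution must coincide with the entropy solution of~\eqref{e:scariep}. The main obstacle is verifying that the shock speed of a $2$-shock of the \emph{full} system, projected to the $v$-variable, agrees with the scalar shock speed $f'$ evaluated appropriately — i.e. that the Rankine-Hugoniot speed computed from the full system matches the scalar Rankine-Hugoniot speed $(f(v^+)-f(v^-))/(v^+-v^-) = v^+ + v^-$ for the $v$-equation; this is guaranteed because the same single speed governs all three components of a genuine shock, but it must be checked that a $2$-shock actually has $v^+ \ne v^-$ so the scalar jump is nontrivial.

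Finally, for the parametrization claim, I would use the freedom in choosing the sign and scaling of the eigenvector $\vec r_2$ and the parameter $s$ along the $2$-wave fan curve $D_2[s,\bar U]$. Having shown that the $v$-component is strictly monotone along the $2$-wave fan curve (which follows from genuine nonlinearity when $\eta>0$, ensuring $\lambda_2$ and hence $v$ varies monotonically), I can reparametrize so that the second component of $D_2[s,\bar U]$ equals $\bar v + s$ identically; concretely, normalize $\vec r_2$ so that its second component is $+1$, which is permissible as long as that component is nonzero. I expect this normalization step to be the mildest part of the argument, the only subtlety being to confirm that the second component of $\vec r_2$ does not vanish in the unit ball — but this is forced, since if it vanished the $2$-wave would leave $v$ unchanged and the three wave fan curves could not span a neighborhood, contradicting strict hyperbolicity and the Local Invertibility step recalled in~\S\ref{ss:lax}.
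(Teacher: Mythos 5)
The paper does not actually prove Lemma~\ref{l:wavefan2}: \S~\ref{ss:baj} opens by deferring the proofs of the results collected there to~\cite{BaJ,CaravennaSpinolo:counterex}, so your proposal is not competing with an in-paper argument. Judged on its own, your outline is correct and is the natural route: the second flux component is $v^2$, independent of $u$, $w$ and $\eta$, so $v$ is automatically a distributional solution of the scalar law, and the real content is the admissibility and the parametrization. The one fact you should state explicitly, because it powers every step of your argument at once, is that $(0,1,0)$ is a left eigenvector of $DF_\eta$ with eigenvalue $2v$. It gives: (i) $\lambda_2(U)=2v=f'(v)$ for $f(v)=v^2$, so the system's Lax condition on a $2$-shock is literally the scalar Oleinik condition and the Rankine--Hugoniot speed is the scalar one $v^-+v^+$; (ii) by orthogonality of left and right eigenvectors for distinct eigenvalues, $\vec r_1$ and $\vec r_3$ have vanishing second component, so $v$ is untouched by $1$- and $3$-waves (this is how Lemma~\ref{l:wavefan} is obtained, not merely consistent with it); (iii) $\nabla\lambda_2\cdot\vec r_2=2(\vec r_2)_2$, so the nonvanishing of the second component of $\vec r_2$, which you need both to normalize $(\vec r_2)_2=1$ and to reparametrize the Hugoniot branch by the $v$-jump, is exactly genuine nonlinearity of the \emph{second} field. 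Note that this holds for all $\eta\in[0,1/4)$ including $\eta=0$ (only the first and third fields degenerate there), which matters because the lemma is invoked at $\eta=0$ in the proof of Lemma~\ref{L:twoHugoniotlocus}; your qualifier ``when $\eta>0$'' is therefore unnecessary, though your fallback argument via linear independence of the three eigenvectors covers $\eta=0$ anyway. Finally, inside a $2$-rarefaction fan the self-similar relation $x/t=\lambda_2=2v$ shows the $v$-profile is exactly the scalar rarefaction; but since $v$ is continuous and piecewise smooth there, entropy production is concentrated on jumps and your jump-by-jump verification already suffices. I see no genuine gap.
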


\section{Interaction of two 2-shocks}
\label{s:22}
We first rigorously state Lemma~\ref{l:22:heuristic}
\begin{lemma}
\label{l:22}
There is a sufficiently small constant $\varepsilon >0$ such that the following holds. Fix a constant $a$ such that $0<a<1/2$ and set $U_{\sharp}: =(a, 0, - a)$. Assume that 
\begin{align*}
 & |U_\ell - U_{\sharp} | \leq \varepsilon a, && 0 \leq \eta \leq \varepsilon a ,\\
 &s_1, s_2 <0, && s_1,s_2 \in [-\varepsilon a ,0]. 
\end{align*}
 Assume furthermore that 
 \begin{equation}
 \label{e:primainterazione}
 U_r = D_2 \Big[s_2, D_2 [s_1, U_\ell] \Big]. 
 \end{equation}
 Then there are $\sigma <0$ and $\tau >0$ such that
 \begin{equation}
 \label{E:dini}
 U_r = D_3 \Big[ \tau, D_2 \big[ s_1 + s_2, D_1 [\sigma, U_\ell] \big] \Big]\,.
 \end{equation}
\end{lemma}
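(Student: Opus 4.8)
The plan is to solve~\eqref{E:dini} by the Implicit Function Theorem and then to pin down the signs of $\sigma$ and $\tau$ via the perturbation argument announced in the introduction, computing the leading order explicitly at $\eta=0$.

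\emph{Reduction via the second component.} First I would exploit the decoupling of the $v$-coordinate. By Lemma~\ref{l:wavefan} the $1$- and the $3$-wave fan curves leave $v$ unchanged, while by Lemma~\ref{l:wavefan2} the $v$-coordinate of $D_2[s,\bar U]$ equals $\bar v+s$. Hence the $v$-component of the right-hand side of~\eqref{E:dini} equals $\bar v_\ell+(s_1+s_2)$, and the same bookkeeping applied to $U_r=D_2[s_2,D_2[s_1,U_\ell]]$ shows the $v$-component of $U_r$ is also $\bar v_\ell+(s_1+s_2)$. Thus the $v$-component of~\eqref{E:dini} holds identically, and in particular the middle strength is \emph{forced} to be exactly $s_1+s_2$ rather than being a free unknown. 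It remains to solve the two scalar equations coming from the $u$- and $w$-components for the two unknowns $(\sigma,\tau)$.

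\emph{Existence and the factor $s_1s_2$.} I would set $E(\sigma,\tau):=D_3\big[\tau,D_2[s_1+s_2,D_1[\sigma,U_\ell]]\big]-D_2[s_2,D_2[s_1,U_\ell]]$, regarded through its $u$- and $w$-components as a smooth map $\R^2\to\R^2$ depending on the parameters $s_1,s_2,\eta,U_\ell$. At $\sigma=\tau=0$ and for small strengths the differentials $\partial_\sigma E$ and $\partial_\tau E$ are, to leading order, the projections of $\vec r_1(U_\ell)$ and $\vec r_3(U_\ell)$ onto the $(u,w)$-plane, i.e. the columns $(1,\bar v)$ and $(1,\bar v-2)$ of Lemma~\ref{l:wavefan}, whose determinant equals $-2\neq0$. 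Hence the Jacobian is invertible on a fixed neighbourhood of $U_\sharp$, uniformly in the small parameters, and the Implicit Function Theorem yields a unique smooth solution $(\sigma,\tau)=(\sigma(s_1,s_2,\eta,U_\ell),\tau(s_1,s_2,\eta,U_\ell))$ of~\eqref{E:dini} for all data in the stated range, once $\varepsilon$ is small. Moreover, when $s_1=0$ or $s_2=0$ one incoming shock is absent, so $U_r=D_2[s_1+s_2,U_\ell]$ and $(\sigma,\tau)=(0,0)$; thus $\sigma,\tau$ vanish on $\{s_1s_2=0\}$ and, being smooth, factor as $\sigma=s_1s_2\,g_\sigma$ and $\tau=s_1s_2\,g_\tau$ with $g_\sigma,g_\tau$ smooth.

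\emph{The sign, which is the crux.} Since $s_1,s_2<0$ we have $s_1s_2>0$, so the signs of $\sigma,\tau$ coincide with those of $g_\sigma,g_\tau$, and by continuity it suffices to check $g_\sigma<0$, $g_\tau>0$ at the base point $s_1=s_2=0$, $\eta=0$, $U_\ell=U_\sharp$, i.e. for the leading mixed second-order interaction coefficient. Here I would use the explicit structure at $\eta=0$: the flux $F_0$ decouples $v$, and on each plane $\{v=\mathrm{const}\}$ the map $(u,w)\mapsto(F_0^1,F_0^3)$ is linear with matrix $B(v)$ satisfying $\operatorname{tr}B(v)=0$ and $\det B(v)=-16$ for every $v$ (so $\lambda_1=-4$, $\lambda_3=4$ are constant and the $1$- and $3$-fields are linearly degenerate, consistently with Lemma~\ref{l:eige}). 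Using the Burgers shock speed $v+\bar v$ from Lemma~\ref{l:wavefan2}, the $(u,w)$-part of the $2$-shock curve becomes the explicit linear map $T(v,\bar v)=[B(v)-(v+\bar v)\Id]^{-1}[B(\bar v)-(v+\bar v)\Id]$. I would then expand the defect $T(\bar v_\ell+s_1+s_2,\bar v_\ell+s_1)\,T(\bar v_\ell+s_1,\bar v_\ell)-T(\bar v_\ell+s_1+s_2,\bar v_\ell)$ applied to $(u_\sharp,w_\sharp)=(a,-a)$ to second order in $(s_1,s_2)$, and decompose the resulting vector along $\vec r_1(U_\sharp)=(1,0)$ and $\vec r_3(U_\sharp)=(1,-2)$ in the $(u,w)$-plane to read off $g_\sigma$ and $g_\tau$.

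The main obstacle is exactly this last sign computation: it requires carrying the matrix expansion to second order and verifying that the two coefficients emerge with opposite signs, negative for $\sigma$ and positive for $\tau$. Once the strict signs are established at $\eta=0$, the smooth dependence of the wave fan curves on $\eta$ and on $U_\ell$ propagates $g_\sigma<0<g_\tau$ to the full range $0<\eta\le\varepsilon a$, $|U_\ell-U_\sharp|\le\varepsilon a$ by further shrinking $\varepsilon$; this perturbation step uses only strict hyperbolicity and genuine nonlinearity from Lemma~\ref{l:eige}, never the explicit form of $p_1,p_3$.
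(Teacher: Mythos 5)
Your framework coincides with the paper's: reduce the middle strength to $s_1+s_2$ through the $v$-component, solve for $(\sigma,\tau)$ by inverting a map whose columns are, to leading order, the projections of $\vec r_1$ and $\vec r_3$, compute explicitly at $\eta=0$ using the linear structure of $(u,w)\mapsto \widehat F_0$ (your matrix $T(v,\bar v)$ is exactly the paper's $\Id+\bE$), and perturb in $(\eta,U_\ell)$. The genuine gap is in the step you yourself flag as the crux. You factor $\sigma=s_1s_2\,g_\sigma$, $\tau=s_1s_2\,g_\tau$ and claim that ``by continuity it suffices to check $g_\sigma<0$, $g_\tau>0$ at the base point'', i.e.\ that the mixed second-order coefficients $\partial^2\sigma/\partial s_1\partial s_2$ and $\partial^2\tau/\partial s_1\partial s_2$ at $s_1=s_2=0$ are non-zero with definite signs. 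They are not: both vanish identically. This is the classical cubic interaction estimate for waves of the \emph{same} family (Bressan, estimate (7.32), p.~133), and the paper states it explicitly in \S~\ref{ss:22etanonzero}. Equivalently, in the expansion you propose, every second-order term of the defect $T(\bar v_\ell+s_1+s_2,\bar v_\ell+s_1)\,T(\bar v_\ell+s_1,\bar v_\ell)-T(\bar v_\ell+s_1+s_2,\bar v_\ell)$ cancels, and the defect is $O(|(s_1,s_2)|^3)$; carrying the computation ``to second order'' as you plan produces the zero matrix, so the continuity argument from the base point gives nothing.

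The signs are decided only at third order: the paper's expansion \eqref{E:mclaurin} gives $(\sigma_0,\tau_0)=\frac{a}{32}(1,-1)\,s_1s_2(s_1+s_2)+o(|(s_1,s_2)|^3)$, so that $g_\sigma\approx\frac{a}{32}(s_1+s_2)$, which vanishes at the origin and is negative only because $s_1+s_2<0$. Note what this changes structurally: the conclusion does \emph{not} hold on all of $\{s_1s_2>0\}$, as your factorization picture would suggest; it uses in an essential way that both incoming waves are shocks, i.e.\ $s_1+s_2<0$. Your plan can be repaired exactly along the paper's lines: push the expansion of the defect to third order (the paper factors out $4s_1s_2(s_1+s_2)$ divided by $(s_1^2-16)((s_1+s_2)^2-16)((2s_1+s_2)^2-16)$ and evaluates the remaining matrix at the origin), and replace the final perturbation step by quantitative closeness of the \emph{third-order} Taylor coefficients, namely $\bigl|\partial^3\sigma_\eta/\partial s_1^2\partial s_2 - a/16\bigr|+\bigl|\partial^3\sigma_\eta/\partial s_1\partial s_2^2 - a/16\bigr|\le C\varepsilon a$ and the analogous bound for $\tau_\eta$ with $-a/16$, since there is no strict sign of $g_\sigma,g_\tau$ at the base point to propagate by continuity. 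One also needs the remainder in the form $o(|(s_1,s_2)|)\,s_1s_2(s_1+s_2)$ --- which does follow from your observation that $\sigma,\tau$ vanish on the axes, whence all pure derivatives in $s_1$ and in $s_2$ vanish --- so that it is dominated by the cubic term uniformly on the quadrant $s_1,s_2<0$.
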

Note that by combining~\eqref{E:dini} with the inequalities $\sigma <0$, $\tau >0$ and $s_1 + s_2 < 0$ we get that the three outgoing waves are all shocks. 
The proof of Lemma~\ref{l:22} is organized as follows:
\begin{itemize}
\item[\S~\ref{ss:22etanonzero}:] by relying on a perturbation argument, we show that 
the proof of Lemma~\ref{l:22} boils down to the proof of the Taylor expansion~\eqref{E:mclaurin}. 
\item[\S~\ref{ss:22etazero}:] we complete the proof by establishing~\eqref{E:mclaurin}. 
\end{itemize}
\subsection{Proof of Lemma~\ref{l:22}: first step}
\label{ss:22etanonzero}
We start with some preliminary considerations. Assume that the states $U_\ell$ and $U_r$ satisfy~\eqref{e:primainterazione}. Next, solve the Riemann problem between $U_\ell$ (on the left) and $U_r$ (on the right): owing to~\cite{Lax}, this amounts to determine by relying on the Local Invertibility Theorem the real numbers $\sigma$, $s$ and $\tau$ such that 
\begin{equation}
 \label{E:dini2}
 U_r = D_3 \Big[ \tau, D_2 \big[ s, D_1 [\sigma, U_\ell] \big] \Big].
 \end{equation}
Establishing the proof of Lemma~\ref{l:22} amounts to prove that $s = s_1 + s_2 <0$ 
and that $\sigma <0$, $\tau>0$. 

To prove that $s = s_1 + s_2$ we recall Lemma~\ref{l:wavefan2} and the fact that the $v$ component is constant along the 1-st and the 3-rd wave fan curves $D_1$ and $D_3$. We conclude that $s = v_{r} - v_{\ell}= s_1 + s_2 <0$ . Note that $v_r$ and $v_{\ell}$ are the second component of $U_r$ and $U_{\ell}$. 

We are left to prove that $\sigma<0$ and $\tau>0$. We first introduce some notation: we regard $\sigma$ and $\tau$ as functions of $\eta$, $s_1$ and $s_2$ and $U_\ell$ and we write $\sigma_\eta (s_1, s_2, U_\ell)$ and $\tau_\eta (s_1, s_2, U_\ell)$ to express this dependence. Note that $\sigma$ and $\tau$ depend on $\eta$ because the wave fan curve $D_2$ depends on $\eta$. 

Owing to the Implicit Function Theorem, the regularity of $\sigma_\eta (s_1, s_2, U_\ell)$ and $\sigma_\eta (s_1, s_2, U_\ell)$ is at least the same as the regularity of the functions $D_1$, $D_2$ and $D_3$. Also, note that the Lax Theorem~\cite{Lax} (see also~\cite[p.101]{Bre}) states that the wave fan curves $D_1$, $D_2$ and $D_3$ are $C^2$.  The reason why we can achieve $C^\infty$ regularity is because we are actually considering the wave fan curves in regions where they are $C^\infty$. To see this, we first point out that, owing to~\eqref{e:wavefan1} and~\eqref{e:wavefan2}, 
 the wave fan curves $D_1$, $D_3$ are straight lines and hence they are $C^\infty$. 
Next, we point out that we are only interested in \emph{negative} values of $s_1+s_2$. Hence, we can replace the 2-wave fan curve $D_2$ defined as in~\eqref{e:wavefan} with the 2-Hugoniot locus $S_2$. We recall that the 2-Hugoniot locus $S_2[s, \bar U]$ contains all the states that can be connected to $\bar U$ by a shock, namely all the states such that the couple $( \bar U, S_2[s, \bar U]) $ satisfies the Rankine-Hugoniot conditions. The 2-Hugoniot locus $S_2[s, \bar U]$ is $C^\infty$ and by combining all the previous observations we can conclude that $\sigma_\eta (s_1, s_2, U_\ell)$ and $\tau_\eta (s_1, s_2, U_\ell)$ are both $C^\infty$  with respect to the variables $(\eta, s_1, s_2, U_\ell)$. 

Next, we discuss the partial derivatives of $\sigma_\eta (s_1, s_2, U_\ell)$ and  
$\tau_\eta (s_1, s_2, U_\ell)$ with respect to $(s_1, s_2)$ at the point  $(\eta, 0, 0, U_\ell)$. 
By arguing as in the proof of estimate (7.32) in~\cite[p.133]{Bre} we conclude that  
\begin{itemize}
\item for every $U_\ell$, for every $\eta>0$ and every integer $k \ge 1$ we have the following equalities:
\begin{align}
\label{E:vanish}
 \left. \frac{\partial^k \sigma_\eta}{\partial s_1^k} \right|_{ (0, 0, U_\ell)}&=
 \left. \frac{\partial^k \sigma_\eta}{\partial s_2^k} \right|_{ (0, 0, U_\ell)}=
 \left. \frac{\partial^k \tau_\eta}{\partial s_1^k} \right|_{ (0, 0, U_\ell)}
 =
 \left. \frac{\partial^k \tau_\eta}{\partial s_2^k} \right|_{ (0, 0, U_\ell)}= 0.
\end{align}
\item For every $U_\ell$ and for every $\eta>0$ we also have the following equality concerning the derivatives of second order: 
$$
    \left. \frac{\partial^2 \sigma_\eta}{\partial s_1 \partial s_2} \right|_{ (0, 0, U_\ell)}=
     \left. \frac{\partial^2 \tau_\eta}{\partial s_1 \partial s_2} \right|_{ (0, 0, U_\ell)} =0. 
$$ 
\end{itemize} 
This implies that $\sigma_\eta$ and $\tau_\eta$ admit the following Taylor expansions
\begin{equation}
\label{e:taylor2}
\begin{split}
     &
     \sigma_\eta (s_1, s_2, U_\ell) = {\frac{1}{2}}
     \frac{\partial^3\sigma_{\eta}(0,0,U_{\ell})}{\partial s_{1}^2  \partial s_{2}} \, s_1^2 s_2 +
     {\frac{1}{2}} \frac{\partial^3\sigma_{\eta}(0,0,U_{\ell})}{\partial s_{1}\partial s_{2}^2 } \, s_1 s_2^2 \\ & 
     \qquad \qquad \qquad \quad+
      o (| (s_1, s_2) |) \, s_1 s_2 (s_1+ s_2)  \\ &
      \tau_\eta (s_1, s_2, U_\ell) = {\frac{1}{2}}
     \frac{\partial^3\tau_{\eta}(0,0,U_{\ell})}{\partial s_{1}^2 \partial s_{2}} \, s_1^2 s_2 +
    {\frac{1}{2}}
     \frac{\partial^3\tau_{\eta}(0,0,U_{\ell})}{\partial s_{1}\partial s_{2}^2 } \, s_1 s_2^2 +\\ & 
     \qquad \qquad \qquad \quad+
      o (| (s_1, s_2) |) \, s_1 s_2 (s_1+ s_2)\\      \end{split}
\end{equation}  
In \S~\ref{ss:22etazero} we prove that when $\eta=0$ and $U_\ell = U_{\sharp}$ the functions $\sigma$ and $\tau$ admit the Taylor expansions
\begin{subequations}
\label{E:mclaurin}
\begin{align}
\begin{split}
\begin{pmatrix} \sigma_0(s_1, s_2, U_{\sharp}) \\ \tau_0(s_1, s_2, U_{\sharp})\end{pmatrix}= \frac{a}{32} \begin{pmatrix} 1 \\ -1\end{pmatrix}&
s_1 s_2 (s_1+ s_2)
+ o (| (s_1, s_2) |^3). \end{split}
\end{align}
\end{subequations}
Next, we use the Lipschitz continuous dependence of the derivatives of 
third order with respect to $\eta$ and $U_\ell$ and we conclude that 
\begin{align*}
&\left|\frac{\partial^3\sigma_{\eta}(0,0,U_{\ell})}{\partial s_{1}^2 \partial s_{2}}-\frac{a}{{16}}\right|+\left|\frac{\partial^3\sigma_{\eta}(0,0,U_{\ell})}{\partial s_{1}\partial s_{2}^2 }-\frac{a}{{16}}\right|<C \varepsilon a
\\
&\left|\frac{\partial^3\tau_{\eta}(0,0,U_{\ell})}{\partial s_{1}^2 \partial s_{2}}+\frac{a}{{16}}\right|+\left|\frac{\partial^3\tau_{\eta}(0,0,U_{\ell})}{\partial s_{1}\partial s_{2}^2}+\frac{a}{{16}}\right|< C \varepsilon a
\end{align*}
provided that $0 \leq \eta \leq \varepsilon a$ and $|U_\ell - U_\sharp| \leq \varepsilon a$. In the above expression, $C$ denotes a universal constant. By plugging the above expressions into~\eqref{e:taylor2} and recalling that $s_1, s_2 < 0$ we can eventually conclude that, if $\varepsilon$ is sufficiently small,  then   
\begin{align*}
 \sigma_\eta(s_1, s_2, U_\ell)<  \phantom{-}\frac{a}{64} s_1 s_2 (s_1+ s_2)<0, \\
 \tau_\eta(s_1, s_2, U_\ell)> -\frac{a}{64 }s_1 s_2 (s_1+ s_2)>0.
\end{align*}
The proof of the lemma is complete.  

\subsection{Proof of formula~\eqref{E:mclaurin}}
\label{ss:22etazero}
The proof of the Taylor expansion~\eqref{E:mclaurin} is divided into two parts:
\begin{itemize}
\item[\S~\ref{sss:2H}:] as a preliminary result we determine the structure of the Hugoniot locus $S_2[s, \underline U]$
\item[\S~\ref{sss:conclu}:] we conclude the proof. 
\end{itemize}
Note that in this paragraph we always assume $\eta =0$ because formula~\eqref{E:mclaurin} deals with this case. 
\subsubsection{The 2-Hugoniot locus}
\label{sss:2H}
Before giving the technical results, we introduce some notation. First, we recall that we term $F_0$ the flux function $F_\eta$ in~\eqref{E:pertSyst} in the case when $\eta=0$. 
In the following, we will mostly focus on the behavior of the first and the third component of $U$. Hence, it is convenient to term $\hat U$ and $\hat F_0$ the vectors obtained by erasing the second components of $U$ and $F_0$, respectively. We have the relation 
 \begin{equation}
\label{E:erratacorrige}
\widehat F_{0}(U)  =
	 4	\left(\begin{matrix}
	v-1 & -1\\
	v(v-2) & 1-v
	\end{matrix}\right) 
	\left(\begin{matrix} u\\ w	\end{matrix}\right)  = \widehat J(v) \cdot \widehat U,
\end{equation}
where we have also introduced the $2 \times 2$ matrix $\widehat J(v)$.

Finally, we recall that we term $S_2 [s, \bar U]$ the 2-Hu\-go\-niot locus passing through $\bar U$, namely the set of states that can be connected to $\bar U$ by a (possibly not admissible) shock of the second family. Also, as usual we denote by $\bar u$, $\bar v$ and $\bar w$ the first, second and third component of $\bar U$, respectively. We use the notation $\widehat{\bar U}=(\bar u,\bar w)$.
\label{}
\begin{lemma}
\label{L:twoHugoniotlocus}
Fix $\eta =0$ and assume that $|2 \bar v + s| < 4$, then the 
2-Hugoniot locus through $\bar U$ has the following expression:
the second component of $S_{2}[s, \bar U]$ is $\bar v+s$ while the first and third components are
\begin{equation}
\label{E:explicit}
\begin{split}
&\widehat{S_{2}}[s, \bar U]= 
\widehat{\bar U} + \bE(\bar v, s) \widehat{\bar U}
\end{split}
\end{equation}
where the $2 \times 2$ matrix $\bE (\bar v, s)$ is 
\begin{align*}
\bE(\bar v,s)
=
\frac{4s}{(2 \bar v + s)^2 - 16} 
\begin{pmatrix}
s+4-2 \bar v& 4\\
({s}+4) (s-2) +4 \bar v& 3s-4+2 \bar v \end{pmatrix} .
\end{align*} 
\end{lemma}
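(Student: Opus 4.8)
The plan is to write out the Rankine--Hugoniot conditions for a 2-shock and then exploit two special features of $F_0$: the second equation decouples, and $\widehat F_0$ is \emph{linear} in $\widehat U$ by~\eqref{E:erratacorrige}. Let $U = S_2[s,\bar U]$ and let $\lambda$ be the shock speed, so that $F_0(U) - F_0(\bar U) = \lambda (U - \bar U)$. Parametrizing the locus so that the second component of $U$ equals $\bar v + s$ (the convention of Lemma~\ref{l:wavefan2}), the second scalar equation reads $v^2 - \bar v^2 = \lambda(v - \bar v)$; since $s \neq 0$ I may divide by $v - \bar v = s$ to obtain the shock speed
\[
 \lambda = v + \bar v = 2\bar v + s .
\]

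First I would reduce the remaining two equations to linear algebra. Writing $\widehat F_0(U) = \widehat J(v)\widehat U$, the first and third Rankine--Hugoniot equations become
\[
 \widehat J(v)\widehat U - \widehat J(\bar v)\widehat{\bar U} = \lambda\big(\widehat U - \widehat{\bar U}\big),
 \qquad \text{that is} \qquad
 \big(\widehat J(v) - \lambda\Id\big)\widehat U = \big(\widehat J(\bar v) - \lambda\Id\big)\widehat{\bar U},
\]
with $v = \bar v + s$ and $\lambda = 2\bar v + s$. The key structural observation is that $\widehat J(v)$ has zero trace and determinant $-16$ for every $v$, so its characteristic polynomial is $\mu^2 - 16$ independently of $v$, whence
\[
 \det\big(\widehat J(v) - \lambda\Id\big) = \lambda^2 - 16 = (2\bar v + s)^2 - 16 .
\]
This is precisely why the hypothesis $|2\bar v + s| < 4$ guarantees invertibility. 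Solving, $\widehat U = (\widehat J(v) - \lambda\Id)^{-1}(\widehat J(\bar v) - \lambda\Id)\widehat{\bar U}$, and comparison with~\eqref{E:explicit} identifies
\[
 \bE(\bar v, s) = \big(\widehat J(v) - \lambda\Id\big)^{-1}\big(\widehat J(\bar v) - \lambda\Id\big) - \Id .
\]

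To conclude I would compute $\bE$ explicitly through the $2\times 2$ adjugate formula, so that the scalar $1/(\lambda^2 - 16) = 1/\big((2\bar v + s)^2 - 16\big)$ supplies the prefactor appearing in the statement. Substituting $v = \bar v + s$, $\lambda = 2\bar v + s$ turns the four entries of $\widehat J(v) - \lambda\Id$ and $\widehat J(\bar v) - \lambda\Id$ into explicit polynomials in $(\bar v, s)$ (for instance $4(v-1) - \lambda = 2\bar v + 3s - 4$ and $4(1-\bar v) - \lambda = 4 - 6\bar v - s$); expanding $\mathrm{adj}(\widehat J(v) - \lambda\Id)(\widehat J(\bar v) - \lambda\Id) - (\lambda^2 - 16)\Id$ and factoring, every entry turns out to be $4s$ times the corresponding entry of the matrix displayed in the statement, the cubic-in-$\bar v$ contributions cancelling. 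This final algebraic verification is the only laborious step, yet it is purely mechanical; all the conceptual content lies in the reduction to the linear system and in the identity $\det(\widehat J(v) - \lambda\Id) = \lambda^2 - 16$.
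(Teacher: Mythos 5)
Your proposal is correct and follows essentially the same route as the paper: both derive the shock speed $2\bar v+s$ from the decoupled second Rankine--Hugoniot equation, exploit the linearity $\widehat F_0(U)=\widehat J(v)\widehat U$ to reduce the remaining two equations to a $2\times2$ linear system, and invert a matrix whose determinant is $(2\bar v+s)^2-16$, which is exactly where the hypothesis $|2\bar v+s|<4$ enters. The only cosmetic differences are that the paper solves for the increment $\mathfrak A=\widehat{S_2}[s,\bar U]-\widehat{\bar U}$, obtaining $\bE=\bA^{-1}(\bar v+s,\gamma)\bigl[\widehat J(\bar v+s)-\widehat J(\bar v)\bigr]$ (algebraically identical to your $(\widehat J(v)-\lambda\Id)^{-1}(\widehat J(\bar v)-\lambda\Id)-\Id$, and with the factor $4s$ conveniently pulled out before the matrix multiplication), and that it computes the determinant directly rather than via your tidier observation that $\widehat J(v)$ has trace $0$ and determinant $-16$ for every $v$.
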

\begin{proof}
By Lemma~\ref{l:wavefan2} the second component of $S_{2}[s,\bar U]$ is $\bar v+s$.
To construct $S_2 [s, \bar U]$ we use the Rankine-Hugoniot conditions, which are a system of 3 equations. Owing to Lemma~\ref{l:wavefan2}, the second equation reads  
\[
\gamma  s=(\bar v+s)^{2}- \bar v^{2}
\]
and this implies that the speed  $\gamma$ of the 2-shock is 
\begin{equation}
\label{E:gamma}
\gamma  =2 \bar v+s.
\end{equation}
We define the vector $\mathfrak A(s,\bar U)$ by setting
\[
\mathfrak A(s,\bar U)
:=\widehat{S_{2}}[s, \bar U]-\widehat{\bar U}
\]
and we point out that to establish Lemma~\ref{L:twoHugoniotlocus} we are left to show that 
\begin{equation}
\label{e:correzione}
      \mathfrak A(s,\bar U)={ \bE (\bar v, s)} \widehat{\bar U}.
\end{equation}
The first and the third equations in the Rankine-Hugoniot conditions can be written as
\begin{equation}
\label{e:rh13}
\begin{split}
\gamma \mathfrak A(s,\bar U)
=
\widehat{J}(\bar v+s) \left[\widehat{\bar U}+\mathfrak A(s,\bar U)\right]-\widehat{J}(\bar v)\widehat{\bar U}, 
\end{split}
\end{equation}
where $\widehat J$ {is the same as in~\eqref{E:erratacorrige}.}
Next, we introduce  the $2\times2$ matrix
\[
\begin{split}
\bA(v,\gamma) &=\gamma\Id - \widehat{J}(v)\\
&=	\left(\begin{matrix}
	\gamma & 0\\
	0 & \gamma
	\end{matrix}\right) - 4	\left(\begin{matrix}
	v-1 & -1\\
	v(v-2) & 1-v
	\end{matrix}\right) ,
\end{split}
\]
and we rewrite~\eqref{e:rh13} as 
\[
\bA^{}\left(\bar v +s,\gamma \right)
\mathfrak A(s,\bar U) =
 \left[ \widehat J(\bar v+s)-\widehat J(\bar v)
 \right] \widehat{\bar U} ,
\]
{which implies~\eqref{e:correzione}} provided that
\[
\begin{split}
\bE(\bar v,s)&=\bA^{-1}(\bar v +s,\gamma) \big[ \widehat J(\bar v+s)-\widehat J(\bar v)
 \big] \\
\end{split}
\]
By recalling that $\gamma=2\bar v+s$  we can compute the explicit expression 
of the above matrices: 
\begin{align*}
&\bA(\bar v+s,2 \bar v+ s)= 
\begin{pmatrix}
4 -3s - 2 \bar v & 4\\
 4 (2 - \bar v-s) (\bar v+s) &   6 \bar v+5s  -4 
\end{pmatrix},
\\
&\widehat J(\bar v+s)-\widehat J(\bar v)=4s\begin{pmatrix}
1 & 0\\
 s+2\bar v -2 & -1
\end{pmatrix} .
\end{align*}
The determinant of the matrix $\bA(\bar v+s,2 \bar v+ s)$ is 
\[
\underline{\mathrm{det}}:=(2 \bar v+s)^2 -16
\]
and hence the matrix is invertible when $|2 \bar v +s| <4$. 
We can now complete the lemma by computing the explicit expression 
of $\bE$, namely 
\begin{align*}
&\bE(\bar v,s)
=
\frac{1}{\underline{\mathrm{det}}}
\begin{pmatrix} 6\bar v + 5s-4 & -4 \\ 
4 (\bar v+s-2) (\bar v+s) & 4-3 s - 2 \bar v \end{pmatrix}
\cdot
4s\begin{pmatrix}
1 & 0\\
 s+2\bar v -2 & -1
\end{pmatrix} \\
&=
\frac{4s}{\underline{\mathrm{det}}}
\begin{pmatrix}
s+4-2\bar v& 4\\
({s}+4) (s-2) +4\bar v& 3s-4+2\bar v \end{pmatrix} 
.\qedhere
\end{align*} 
\end{proof}

\subsubsection{Conclusion of the proof of formula~\eqref{E:mclaurin}}
\label{sss:conclu}
We are now ready to establish~\eqref{E:mclaurin}. We first recall some notation: we consider the system of conservation laws with flux $F_0$, see~\eqref{E:pertSyst}. We consider the collision between two 2-shocks and we assume that $U_{\sharp}=(a, 0, -a)$, $U_m$ and $U_r$ are the left, middle and right states before the interaction. This means that for some $s_1<0$, $s_2<0$ we have 
\begin{equation}
\label{e:quasi}
\begin{split}
 U_r &= D_2 [s_2, U_m]= D_2 \big[ s_2, D_2 [s_1, U_{\sharp} ] \big] \\ &= S_2 \big[ s_2, S_2 [s_1, U_{\sharp}]   \big].
 \end{split}
\end{equation}
In the above expression, $S_2$ represents the $2$-Hugoniot locus. 
To establish the last equality we used the fact that $s_1$ and $s_2$ are both negative. We plug~\eqref{E:explicit} into~\eqref{e:quasi} and we use the equality $v_{\sharp}=0$: we arrive at
\begin{equation}
\label{E:prima}
\begin{split}
 &\widehat{ U_r }
 = \left[\widehat{U_{\sharp}} + \bE(0, s_1) \widehat{U_{\sharp}} \right]+ \bE(s_1, s_2) 
 \Big[ \widehat{U_{\sharp}} + \bE(0, s_1) \widehat{U_{\sharp}} \Big] \\
 & = 
\widehat{U_{\sharp}} + \Big[ \bE(0, s_1) + \bE(s_1, s_2) + \bE(s_1, s_2) \bE(0, s_1) \Big] \widehat{U_{\sharp}}.
\end{split}
\end{equation}
Next, we focus on the states after the interaction. By arguing as at the beginning of \S~\ref{ss:22etanonzero}, we conclude that it suffices to determine $\sigma= \sigma_0(s_1, s_2, U_{\sharp})$ and 
 $\tau= \tau_0(s_1, s_2, U_{\sharp})$ such that 
 $$
 U_r = D_3 \Big[ \tau, D_2 \big[ s_1 + s_2, D_1 [\sigma, U_{\sharp}] \big]\Big].
 $$
By the explicit expression of $D_1$ and $D_3$ and by applying {Lemma~\ref{L:twoHugoniotlocus}} we infer that the above equality implies 
\begin{equation}
\label{E:dopo}
 \begin{split}
 \widehat U_r & 
 = \left[\widehat{U_{\sharp}} + \sigma \widehat{\vec r_1} (0) \right] + \bE(0, s_1 + s_2)
 \Big[ \widehat{U_{\sharp}} + \sigma\widehat{\vec r_1}(0)\Big]
  + \tau \widehat{\vec r_3}(s_1 + s_2) \\
 & = \widehat{U_{\sharp}} + \bE(0, s_1 + s_2) \widehat{U_{\sharp}}
  +
 \Big[ \Id + \bE(0, s_1 + s_2) \Big] \sigma \widehat{\vec r_1}(0) + \tau \widehat{\vec r_3}(s_1 + s_2) \\
 & = \widehat{U_{\sharp}} + \bE(0, s_1 + s_2) \widehat{U_{\sharp}} +
 \mathbf{H} (s_1 + s_2)
 \left(
 \begin{array}{cc}
 \sigma \\
 \tau \\
 \end{array}
 \right) .
\end{split} 
\end{equation}
In the previous expression we denote by $\widehat{\vec r_1} $ and $\widehat{\vec r_3} $ the  vectors obtained from $\vec r_1$ and $\vec r_2$ by erasing the second component. Also, we introduced the matrix $\mathbf H$: its first column is 
$ \big[ \Id + \bE(0, s_1 + s_2) \big] {\widehat{\vec r_1} (0)}$, the second column is $\widehat{\vec r_3}(s_1 + s_2) $. In the following, we will prove that $\mathbf H(s_1+s_2)$ is invertible provided that $s_1$ and $s_2$ are both sufficiently close to $0$. 
By comparing~\eqref{E:prima} and~\eqref{E:dopo} we then obtain 
\begin{equation}
\label{E:quasifine}
 \left(
 \begin{array}{cc}
 \sigma \\
 \tau \\
 \end{array}
 \right) \\
 =  \underbrace{ \mathbf{H}^{-1} (s_1 + s_2)
 \Big[ \bE(0, s_1) + \bE(s_1, s_2) + \bE(s_1, s_2) \bE(0, s_1) 
 - \bE(0, s_1 + s_2) \Big]}_{\displaystyle{\bG(s_1, s_2)}} \widehat{U_{\sharp}}.
\end{equation}
 Assume that we have established the following asymptotic expansion for $\bG$:
\begin{equation}
\label{E:leftoprove}
 \bG(s_1, s_2) = 
 \frac{1}{32}
\left(\begin{array}{cc}
 4 & 3 \\ 2 & 3
\end{array}
 \right)
 s_{1}s_{2}(s_{1}+s_{2})
 +
 o(|{(s_{1},s_{2})}|^{3}).
\end{equation}
Then by plugging both~\eqref{E:leftoprove} and $ \widehat{U_{\sharp}} = (a, -a)$ into~\eqref{E:quasifine} we obtain the asymptotic expansion~\eqref{E:mclaurin}. 
Hence,  to conclude the proof of~\eqref{E:mclaurin} we are left to establish~\eqref{E:leftoprove}. 

First, we point out that, owing to the expression of $\bE$ in the statement of Lemma~\ref{L:twoHugoniotlocus}, 
\begin{align*}
\bE(0,s)
=&
\frac{4s}{s^2 - 16}
\left(\begin{array}{cc}
s+4& 4\\
({s}+4) (s-2) & 3s-4 \end{array}\right) .
\end{align*} 
This implies that when $s_{1}=s_{2}=0$, the matrix $\bE(0,s_{1}+s_{2})$ vanishes and hence 
\begin{align*}
\mathbf{H}^{-1}(0)& =\Big( \hat {\vec r}_{1}({0})|\hat {\vec r}_{3}({0}) \Big)^{-1}
\stackrel{\eqref{e:wavefan1}, \eqref{e:wavefan2}}{=}\left(\begin{array}{cc} 1 & 1 \\ 0 & -2\end{array}\right)^{-1}
=
\left(\begin{array}{cc}
 1 & 1/2 \\ 0 & -1/2
\end{array}
 \right).
\end{align*}
We compute now the asymptotic expansion of 
\[
\begin{split}
\bE({0},s_{1})
+
\bE({0}+s_{1},s_{2})
&-
\bE({0},s_{1}+s_{2})
+
\bE({0}+s_{1}, s_{2})\bE({0},s_{1}).
\end{split}
\]
By directly computing the sum of the above matrices, we obtain that we can factor the term
\[
\frac{4s_{1}s_{2}(s_{1}+s_{2})}{(s_{1}^{2}-16)((s_{1}+s_{2})^{2}-16)((2s_{1}+s_{2})^{2}-16)},
\]
which multiplies the matrix with coefficients
\begin{align*}
\text{Coeff$_{1,1}$:}&\qquad(s_{1}+4)(s_{1}+s_{2}+4)(6s_{1}+5s_{2}-12) \\
\text{Coeff$_{1,2}$:}& \qquad 4 (5s_{2}^{2}+13s_{1}s_{2}+9s_{1}^{2}-48)\\
\text{Coeff$_{2,1}$:}& \qquad 	 2 (s_{1}+4)(s_{1}+s_{2}+4)(4 - 6 s_{1} \\& \qquad\quad + 2 s_{1}^2 - 7 s_{2} + 4 s_{1} s_{2} + 2 s_{2}^2)\\
\text{Coeff$_{2,2}$:}&\qquad	192 - 128 s_{1} - 36 s_{1}^2 + 26 s_{1}^3 \\& \qquad\quad- 160 s_{2} - 52 s_{1} s_{2} + 65 s_{1}^2 s_{2} - 20 s_{2}^2\\
\phantom{cc} & \qquad \quad + 
 55 s_{1} s_{2}^2 + 16 s_{2}^3.
 \end{align*}
By combining the above computations we obtain the following asymptotic expansion: 
\begin{align}
\label{E:expansionG}
\bG(s_{1},s_{2}) 
&=
-\frac{1}{4^{5}}
\left(\begin{array}{cc}
 1 & 1/2 \\ 0 & -1/2
\end{array}
 \right) 
 \left(\begin{array}{cc}
 -3\cdot4^{3} & -3\cdot 4^{3} \\ 2\cdot4^{3} & 3\cdot 4^{3}
\end{array}
 \right)
\cdot s_{1}s_{2}(s_{1}+s_{2})
 + o(\|(s_{1},s_{2})\|^{3})
 \notag 
 \\
&=
\frac{1}{32}
\left(\begin{array}{cc}
 4 & 3 \\ 2 & 3
\end{array}
 \right)
 s_{1}s_{2}(s_{1}+s_{2})
 +
 o(\|(s_{1},s_{2})\|^{3}).
\end{align}
This establishes~\eqref{E:leftoprove} and hence concludes the proof of~\eqref{E:mclaurin}. \qed

\section{Interaction of a 1-shock and a 2-shock}
\label{s:12}
We first rigorously state  Lem\-ma~\ref{l:12:heuristic}.
\begin{lemma}
\label{l:12}
There is a sufficiently small constant $\varepsilon >0$ such that if $0 \leq \eta \leq \varepsilon$, then the following holds. 
Assume that the states $U_\ell$, $U_r \in \R^3$ satisfy
 \begin{equation}
 \label{e:primainterazione12}
 U_r = D_1 \Big[\sigma, D_2 [s, U_\ell] \Big]
 \end{equation}
 for real numbers $s$, $\sigma$ such that
$$
 \sigma, s <0, \quad |s|, \; |\sigma| <\frac{1}{4}. 
$$
{Furthermore, assume} that $|U_\ell| <1/2$. 
Then there are real numbers $\sigma'$ and $\tau'$ such that
 \begin{equation}
 \label{e:dopointerazione12}
 U_r = D_3 \Big[ \tau', D_2 \big[ s, D_1 [\sigma', U_\ell] \big] \Big]\,
 \end{equation}
 and 
 \begin{equation}
 \label{e:sigmaprimotau}
 2 \sigma \leq \sigma ' \leq \frac{1}{2} \sigma , \quad 
\frac{1}{100} \sigma s \leq \tau' \leq 10 \sigma s. 
 \end{equation}
\end{lemma}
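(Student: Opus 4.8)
The plan is to reduce the three scalar equations expressing \eqref{e:dopointerazione12} to two equations for the first and third components, to solve these explicitly when $\eta=0$, and then to handle $\eta>0$ by perturbation. The starting observation is that the second component is matched automatically and that the outgoing $2$-wave must again have strength $s$. Indeed, by Lemma~\ref{l:wavefan2} the component $v$ is the entropy solution of the \emph{same} scalar problem~\eqref{e:scariep} for both the incoming pattern \eqref{e:primainterazione12} and the sought outgoing pattern, and by \eqref{e:wavefan1}--\eqref{e:wavefan2} it is constant along $D_1$ and $D_3$. Thus $v_r=v_\ell+s$, and in \eqref{e:dopointerazione12} the whole jump $v_r-v_\ell=s$ must be carried by the middle $2$-wave. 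This fixes the outgoing $2$-strength to be exactly $s$ and leaves only the unknowns $\sigma',\tau'$, to be found by matching $\widehat U_r$.

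Next I would compute at $\eta=0$. Since $D_1$ and $D_3$ are the straight lines \eqref{e:wavefan1}--\eqref{e:wavefan2} (valid for every $\eta\in[0,1/4[$) and the $2$-shock acts as the linear map $\widehat U\mapsto[\Id+\bE(v_\ell,s)]\widehat U$ of Lemma~\ref{L:twoHugoniotlocus} --- note that the base state of the $2$-shock has second component $v_\ell$ in both configurations, because $D_1$ preserves $v$ --- subtracting the common term $[\Id+\bE(v_\ell,s)]\widehat U_\ell$ reduces the identity \eqref{e:primainterazione12}$=$\eqref{e:dopointerazione12} to the linear system
\[
\mathbf M\begin{pmatrix}\sigma'\\ \tau'\end{pmatrix}=\sigma\begin{pmatrix}1\\ v_\ell+s\end{pmatrix},
\]
where $\mathbf M$ is the $2\times2$ matrix whose first column is $[\Id+\bE(v_\ell,s)](1,v_\ell)^{\mathsf T}$ and whose second column is $(1,v_\ell+s-2)^{\mathsf T}$. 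I expect Cramer's rule to give clean closed forms. Because neither the right-hand side nor the second column of $\mathbf M$ involves $\bE$, the $\sigma'$-numerator should collapse to $-2\sigma$, and a direct simplification of the explicit $\bE$ of Lemma~\ref{L:twoHugoniotlocus} should give $\det\mathbf M=s-2$, hence $\sigma'=2\sigma/(2-s)$; likewise the $\tau'$-numerator should factor through $\sigma s\,(2v_\ell+s+4)/(2v_\ell+s-4)$, yielding
\[
\tau'=\frac{\sigma s\,(2v_\ell+s+4)}{(s-2)(2v_\ell+s-4)}.
\]
With $\sigma,s<0$, $|s|,|\sigma|<1/4$ and $|v_\ell|<1/2$ one checks that $\sigma'/\sigma=2/(2-s)\in(2/2.25,1)$ and that $\tau'/(\sigma s)$ is \emph{positive} and confined to a fixed interval $[c_1,c_2]$ with $0<c_1<c_2$ independent of the data; both lie strictly inside the windows of \eqref{e:sigmaprimotau}, with a large margin. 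The positivity and uniform lower bound of $\tau'/(\sigma s)$ is precisely what produces the new lower estimate on the outgoing $3$-shock (and the signs $\sigma'<0$, $\tau'>0$ confirm that the three outgoing waves are shocks).

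Then I would pass to $\eta>0$ by perturbation. The curves $D_1,D_3$ are the same straight lines for every $\eta$, and only the $2$-shock map depends on $\eta$: it is $C^\infty$ (its Hugoniot locus is $C^\infty$, as recalled at the beginning of \S~\ref{ss:22etanonzero}) and its correction to the $\eta=0$ map vanishes as $s\to0$. To keep the estimates uniform down to $s,\sigma\to0$, I would not perturb $\sigma',\tau'$ directly but rather the scale-invariant unknowns defined by $\sigma'=\sigma\rho$ and $\tau'=\sigma s\,\theta$: at $\eta=0$ the quantities $\rho$ and $\theta$ lie in the interior of $[1/2,2]$ and $[1/100,10]$ with margins uniform in $(s,\sigma,U_\ell)$. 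Since $\det\mathbf M=s-2\neq0$, the Implicit Function Theorem applies and, after factoring $\sigma$ and $s$ out of the matched equations, presents $\rho$ and $\theta$ as $C^\infty$ functions of $(\eta,s,U_\ell)$ with Lipschitz dependence on $\eta$, so that $|\rho-\rho_0|+|\theta-\theta_0|\le C\eta$ uniformly. Choosing $\varepsilon$ small enough keeps $\rho\in[1/2,2]$ and $\theta\in[1/100,10]$, which is exactly \eqref{e:sigmaprimotau}.

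The main obstacle I anticipate is precisely this uniformity. A crude perturbation would bound only the additive error $|\tau'_\eta-\tau'_0|$ by $O(\eta)$, which is useless when $\sigma s$ is small and would destroy both the lower bound on $\tau'$ and the multiplicative control on $\sigma'$. The decisive point is therefore to arrange that the $\eta$-correction to $\tau'$ genuinely scales like $\eta\,\sigma s$ --- using that $\widehat{S_2^\eta}[s,\cdot]-\widehat{S_2^0}[s,\cdot]$ vanishes both at $\eta=0$ and at $s=0$ --- so that the whole argument, and the perturbation, can be phrased in terms of the scale-invariant quantities $\rho$ and $\theta$ rather than $\sigma',\tau'$ themselves.
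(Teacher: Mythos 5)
Your reduction and your $\eta=0$ computation are correct and coincide with the paper's: the paper fixes $s'=s$ by exactly your argument (Lemma~\ref{l:wavefan2} plus constancy of $v$ along $D_1,D_3$), and then solves the Rankine--Hugoniot conditions directly, following Baiti--Jenssen, obtaining $\sigma_0'=\frac{2\sigma}{2-s}$ and $\tau_0'=\frac{(\gamma+4)\,\sigma s}{(4-\gamma)(2-s)}$ with $\gamma=2v_\ell+s$; since $(s-2)(2v_\ell+s-4)=(2-s)(4-\gamma)$, these are your formulas. I checked your Cramer computation ($\det\mathbf M=s-2$, numerator $-2\sigma$) and it is right, so up to here you have the paper's proof reached through Lemma~\ref{L:twoHugoniotlocus} rather than by rewriting the Rankine--Hugoniot system.

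The gap is in the perturbation step, and it sits exactly at the point you yourself flag as decisive. The nondegeneracy you invoke, $\det\mathbf M=s-2\neq 0$, is the Jacobian with respect to the \emph{unscaled} unknowns $(\sigma',\tau')$; an Implicit Function Theorem in those variables yields only the additive bound $|X-X_0|\le C\eta$, which you correctly dismiss as useless. But in the scaled variables $(\rho,\theta)=(\sigma'/\sigma,\,\tau'/(\sigma s))$ the relevant Jacobian is not $\mathbf M$: dividing the matched (vector) equation by $\sigma$ one gets $\rho\,\Phi+s\theta\,\widehat{\vec r}_3(v_\ell+s)=\widehat{\vec r}_1(v_\ell+s)$, where $\Phi$ is the averaged derivative of $\widehat{D_2^\eta}[s,\cdot]$ along $\vec r_1$, and the $\theta$-column still carries the factor $s$, so the scaled system degenerates precisely as $s\to 0$. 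One cannot ``factor $s$ out of the matched equations'' by a division, because the $\rho$-terms contain no factor $s$. The step can be repaired: writing $\Phi=(1,v_\ell)^{\mathsf T}+s\Psi$ (legitimate since $D_2^\eta[0,\cdot]=\mathrm{id}$ for every $\eta$) and decomposing the equation along the basis $\{(1,v_\ell)^{\mathsf T},\widehat{\vec r}_3(v_\ell+s)\}$, the component along $(1,v_\ell)^{\mathsf T}$ determines $\rho=1+O(s)$, while the $\widehat{\vec r}_3$-component acquires an overall factor $s$, and only that single scalar equation is divided by $s$ before applying the IFT; note also that for $\eta>0$ the resulting $\rho,\theta$ depend on $\sigma$ as well, not just on $(\eta,s,U_\ell)$. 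The paper takes a different and estimate-based route to the same uniformity: it recasts the Rankine--Hugoniot conditions as $AX+\eta\,\mathcal F(X,U_\ell,s,\sigma)=Y$ and runs a contraction argument for $T(X)=X_0-\eta A^{-1}\mathcal F$ on the ball $|X-X_0|\le k\eta\sigma s$ of \eqref{e:palla}, the radius playing the role of your scale invariance; the bilinear smallness $|\mathcal F|\le C|\sigma s|$ is proved by combining the two linear bounds $|\mathcal F|\le C|\sigma|$ and $|\mathcal F|\le C|s|$ (the latter because $\mathcal F\equiv 0$ at $s=0$) through the regularity of $\mathcal F$, as in \cite[Lemma 2.5]{Bre}. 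This is the same mechanism you gesture at, but carried out as an estimate rather than asserted through an IFT that, as written, does not apply.
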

Note that~\eqref{e:sigmaprimotau} implies $\sigma'<0$ and $\tau'>0$. If we combine these inequalities with~\eqref{e:dopointerazione12} and $s<0$ we see that the three outgoing waves are all shocks. 

Establishing the proof of Lemma~\ref{l:12} amounts to establish~\eqref{e:sigmaprimotau}.
Indeed,
\begin{enumerate}
\item by using Lax's construction (see~\S~\ref{ss:lax}) we determine $\sigma',s',\tau'$ 
such that
\[
U_r = D_3 \Big[ {\tau'}, D_2 \big[ s', D_1 [\sigma', U_\ell] \big] \Big].
\]
\item By combining~\eqref{e:wavefan1},~\eqref{e:wavefan2} and Lemma~\ref{l:wavefan2} we obtain that  $s'=s$.
\end{enumerate}
To establish~\eqref{e:sigmaprimotau} we proceed as follows: 
 \begin{itemize}
 \item[\S~\ref{ss:etazero}:] we establish~\eqref{e:sigmaprimotau} in the case when $\eta =0$. 
\item[\S~\ref{ss:etanonzero}:] we conclude the proof by relying on a perturbation argument. 
More precisely, by using the fact that the flux $F_\eta$ in~\eqref{E:pertSyst} smoothly depends on $\eta$ we show that~\eqref{e:sigmaprimotau} holds provided $\eta$ is sufficiently small.
\end{itemize}
Note that, as we have mentioned in the introduction, the precise expression of the function $p_1$ and $p_3$ plays no role in the proof of Lemma~\ref{l:12}, what is actually relevant 
 is that estimates~\eqref{e:sigmaprimotau} hold at $\eta=0$ with strict inequalities and that $\eta$ is sufficiently small.

\subsection{Proof of Lemma~\ref{l:12}: the case $\eta =0$}
\label{ss:etazero}
We establish~\eqref{e:sigmaprimotau} in the case $\eta=0$. This part of the proof is actually the same as in~\cite[p. 844-845]{BaJ}, but for completeness we go over the main steps. 

We term $\sigma'_0$, $\tau'_0$ the real numbers satisfying~\eqref{e:dopointerazione12} when $\eta =0$. Let $v_r$ and $v_\ell$ denote the second components of $U_r$ and $U_\ell$, respectively.
We term $\gamma$ the speed of the incoming 2-shock (which is the same as the speed of the outgoing 2-shock), {we recall Lemma~\ref{l:wavefan2} and the fact that the second component varies only across 2-shocks. We conclude that}
$$
 \gamma = \frac{v^2_r- v^2_\ell}{v_r - v_\ell} = v_r + v_\ell = 2 v_\ell +s . 
$$ 
Since by assumption $|U_\ell|<1/2$ and $|s|< 1/4$, then 
\begin{equation}
\label{e:bounsugamma}
 |\gamma | < 3. 
\end{equation}
By imposing the Rankine-Hugoniot conditions on the incoming and outgoing 2-shocks and by arguing as in~\cite[pp. 844-845]{BaJ}, with the choice $c=4$, we arrive at the following system:
\begin{equation*}
\left\{
\begin{array}{ll}
(\gamma +4) \sigma_0' + (\gamma -4) \tau'_0 = (\gamma +4) \sigma \\
v_\ell (\gamma +4) { \sigma_0'}+ ({ v_\ell}+s -2 )(\gamma -4) \tau'_0 = (v_\ell +s) (\gamma +4 ) \sigma \\ 
\end{array}
\right.
\end{equation*}
If we set
\begin{equation}
\label{E:linsys}
A: = 
\left(
\begin{array}{cc}
\gamma +4 & \gamma-4 \\
v_\ell (\gamma +4) & (v_\ell +s -2) (\gamma -4 ) \\
\end{array}
\right)
\end{equation}
and 
\begin{equation}
\label{e:icszeroipsilon}
X_0 =
\left(
\begin{array}{cc}
\sigma_0' \\
\tau'_0 \\
\end{array}
\right), \quad 
Y = 
\left(
\begin{array}{cc}
\gamma + 4 \\
(v_\ell +s)(\gamma+4)\\
\end{array}
\right) \sigma,
\end{equation}
then the above linear system can be recast as $A X_0 = Y$.
The explicit expression of the matrix $A^{-1}$ is
\begin{equation}
\label{E:allamenouno}
 \frac{1}{(4^2- \gamma^2)(-s +2)}
\left(
\begin{array}{cc}
(v_\ell +s -2) (\gamma -4 ) & -( \gamma-4) \\
- v_\ell (\gamma +4) & (\gamma + 4 ) \\
\end{array}
\right)
\end{equation}
We solve for $\sigma'_0$ and $\tau'_0$ and we obtain 
\begin{equation}
\label{E:icszero}
 \sigma'_0 = \frac{2 }{-s+2 } \sigma, \qquad 
 \tau'_0 = \frac{\gamma +4}{(4-\gamma)(-s +2)} \, s \sigma \end{equation}
By using~\eqref{e:bounsugamma} and the inequality $|s| < 1/4$, we obtain
\begin{equation}
\label{e:boundazero}
 \frac{2}{3} < \frac{2 }{-s+2 } < 1, \quad \frac{1}{21} < \frac{\gamma +4}{(4-\gamma)(-s +2)} < 4 
\end{equation}
and this implies that the estimate~\eqref{e:sigmaprimotau} holds true in the case when $\eta=0$.

\subsection{Proof of Lemma~\ref{l:12}: the case $\eta >0$}
\label{ss:etanonzero}
We are now ready to complete the proof of Lemma~\ref{l:12}. We proceed as follows:
\begin{itemize}
\item[\S~\ref{sss:preliminary}:] we make some preliminary considerations which reduce the proof of Lemma~\ref{l:12}  to the proof of the fact that a certain map is a strict contraction.
\item[\S~\ref{sss:step2}:] we conclude the proof by showing that the map is indeed a strict contraction. 
\end{itemize}
\subsubsection{Preliminary considerations}
\label{sss:preliminary}
We first introduce some notation. We term $U_m$ the intermediate state \emph{before} the interaction, namely
\begin{equation} 
\label{e:uemme}
 U_m : = D_2[s, U_\ell]. 
\end{equation}
Also, we term $U'_m$ and $U''_m$ the intermediate states \emph{after} the interaction, namely
\begin{equation} 
\label{e:uemmeprimo}
\begin{split}
&  U'_m : = D_1 [\sigma', U_\ell], \\ & U''_m : = D_2 [s, U'_m] = 
{D_3 \big[ -\tau' , U_3 \big] =
 D_3 \big[ -\tau' , D_1[ \sigma, U_m] \big]} \\
 & {\qquad =
 D_3 \big[ -\tau' , D_1[\sigma, D_2 [s, U_\ell] ] \big]} \\
\end{split}
\end{equation}
Next, we use~\cite[eq. (5.3)-(5.4)]{BaJ} and we recast the Rankine-Hugoniot conditions for 2-shocks as 
 a nonlinear system in the form 
 \begin{equation}
 \label{e:nonlinearsystem}
 A X + \eta \mathcal F(X, U_\ell, s, \sigma) = Y,
 \end{equation}
 where $A$ and $Y$ are as in~\eqref{E:linsys} and~\eqref{e:icszeroipsilon}, respectively. Also, the vector $X$ is defined by setting 
 $$
 X: = 
 \left(
 \begin{array}{cc}
 \sigma' \\
 \tau' \\
 \end{array}
 \right)
 $$
 and the nonlinear term $\mathcal F(X, U_\ell, s, \sigma) $ is equal to
\begin{equation}
\label{E:effe}
 \left(
 \begin{array}{cc}
 p_1(U''_m) - p_1(U_m) - p_1(U'_m) + p_1(U_\ell) \\
 p_3(U''_m) - p_3(U_m) - p_3(U'_m) + p_3(U_\ell), \\
 \end{array}
 \right).
\end{equation}
In the above expression, the functions $p_1$ and $p_3$ are the same as in~\eqref{E:pq}. 
{Note, however, that the precise expression of $p_1$ and $p_3$ plays no role in the proof, the only relevant point is that $p_1$ and $p_3$ are both regular (say twice differentiable with Lipschitz continuous second derivatives).}  
Note furthermore that we can regard $\mathcal F$ as a function of $X$, $U_\ell$, $s$ and $\sigma$
 because, owing to~\eqref{e:uemme} and~\eqref{e:uemmeprimo}, $U_m$, $U'_m$ and $U''_m$ are functions of $X$, $U_\ell$, $s$ and $\sigma$. Next, we rewrite equation~\eqref{e:nonlinearsystem} as 
 \begin{equation}
 \label{E:fpp}
 X = 
 X_0 - \eta A^{-1} \mathcal F(X, U_\ell, s, \sigma),
 \end{equation}
 where the vector $X_0 = A^{-1} Y$ is given by~\eqref{e:icszeroipsilon} and~\eqref{E:icszero}. 
 
 We now fix $s$, $\sigma$, $\eta$ and $|U_\ell|$ satisfying the assumptions of Lemma~\ref{l:12} and we define the closed ball 
 \begin{equation}
 \label{e:palla}
 \mathfrak K : = \big\{ X= (\sigma', \tau') \in \R^2: \, |X - X_0| \leq k \eta \sigma s \big\}.
 \end{equation}
 In the above expression, $k>0$ is a universal constant that will be determined in the following and $\sigma_0'$ and $\tau'_0$ are defined by~\eqref{E:icszero}. We also define the function $T: \R^2 \to \R^2$ by setting 
 \begin{equation}
 \label{e:T}
 T(X) : = X_0 - \eta A^{-1} \mathcal F(X, U_\ell, s, \sigma).
 \end{equation}
 Assume that $T$ is a strict contraction from $\mathfrak K$ to $\mathfrak K$. Then the proof of Lemma~\ref{l:12} is complete: indeed, owing to~\eqref{E:fpp} the fixed point $X$ satisfies the Rankine-Hugoniot conditions~\eqref{e:nonlinearsystem}. Also, owing to~\eqref{e:boundazero} and to~\eqref{e:palla} we infer that the inequalities~\eqref{e:sigmaprimotau} are satisfied provided that the parameter $\eta$ is sufficiently small. 
 \subsubsection{Conclusion of the proof of Lemma~\ref{l:12}}
 \label{sss:step2}
 In this paragraph we prove that the map $T$ defined by~\eqref{e:T} is a strict contraction on the closed set $ \mathfrak K $ defined by~\eqref{e:palla}. 
 
{First, we make some remarks about notation. To simplify the exposition, in the following we denote by $C$ a universal constant: its precise value can vary from occurrence to occurrence. Also, in the following we will determine the constant $k$ in~\eqref{e:palla} and then choose the constant $\eta$ in such a way that $k \eta \leq 1$. This choice implies in particular that, when $X$ belongs to the set $\mathfrak K$ defined as in~\eqref{e:palla} and the hypotheses of Lemma~\ref{l:12} are satisfied, then the map $\mathcal F$ attains values on a bounded set and so $\mathcal F$ and all its derivatives are bounded by some constant $C$. Finally, note that, if $X \in \mathfrak K$ and the hypotheses of Lemma~\ref{l:12} are satisfied, then $|A^{-1} | \leq C$.
 
 We now proceed according to the following steps. \\
{\sc Step 1:} we point out that to show that the map $T$ is a contraction it suffices to show that 
\begin{equation}
\label{e:doppiastima}
      |\mathcal F (X, U_\ell, s, \sigma)| \leq C \sigma s
\end{equation}
provided that  $X \in \mathfrak K$ and the hypotheses of Lemma~\ref{l:12} hold. 
Indeed, assume that~\eqref{e:doppiastima} holds, then 
$$
   |T(X) - X_0|   \stackrel{\eqref{e:T}}{\leq} \eta 
  | A^{-1} \mathcal F (X, U_\ell, s, \sigma)|
  \stackrel{\eqref{e:doppiastima}}{\leq} C \eta \sigma s
$$
and hence $T$ attains values in the set $\mathfrak K$ defined as in~\eqref{e:palla} 
provided that $k$ is large enough. Also, 
\begin{equation*}
\begin{split}
    |T(X_1) - T(X_2)| & \stackrel{\eqref{e:T}}{\leq} 
    \eta |A^{-1}| |\mathcal F (X_1, U_\ell, s, \sigma) - 
    \mathcal F (X_2, U_\ell, s, \sigma)| \\
    & \leq \eta C |X_1 - X_2| \leq 
    \frac{1}{2}  |X_1 - X_2|
\end{split}
\end{equation*}
provided that the constant $\eta$ is sufficiently small. This implies that $T$ is a contraction and concludes the proof of Lemma~\ref{l:12}. \\
{\sc Step 2:} we establish~\eqref{e:doppiastima}. First, we point out that, if $X \in \mathfrak K$ and the hypotheses of Lemma~\ref{l:12} are satisfied, then
\begin{equation*}
\begin{split}
    | p_1 (U''_m ) - p_1 (U_m) & - p_1 (U'_m) + p_1 (U_\ell) |
     \leq C  \Big( |  
    U''_m  -  U_m | + | U'_m -  U_\ell  | \Big) \\
    & \quad \leq \quad 
    C  \Big( |  
    U''_m  - U_r | + |U_r- U_m | + | U'_m -  U_\ell  | \Big) \\
    & \quad \leq \quad 
    C \Big( |\tau'| + |\sigma | + | \sigma' | \Big)
    \leq C \Big( |\sigma| + | X_0' | + |X - X_0'| \Big)  
    \\  &  
     \stackrel{\eqref{E:icszero},\eqref{e:palla}}{\leq} 
    C \Big( |\sigma|  + \eta k \sigma s \Big)
    \leq C |\sigma|. 
    \\
\end{split}
\end{equation*}
By using an analogous argument, we control the second component of $\mathcal F$ and we arrive at 
\begin{equation}
\label{e:stimasigma}
   |\mathcal F(X, U_\ell, s, \sigma)|
   \leq C |\sigma|. 
\end{equation}
Next, we point out that when $s=0$ we have $U'_m = U''_m$ and $U_\ell = U_m$ and by using again the Lipschitz continuity of the functions $p_1$ and $p_3$ we conclude that 
\begin{equation}
\label{e:stimaesse}
   |\mathcal F(X, U_\ell, s, \sigma)|
   \leq C |s|. 
\end{equation}
Finally, we use the regularity of the function $\mathcal F$ and, by arguing as in the proof of~\cite[Lemma 2.5, p. 28]{Bre}, we combine~\eqref{e:stimasigma} and~\eqref{e:stimaesse} to obtain~\eqref{e:doppiastima}. This concludes the proof of Lemma~\ref{l:12}. \qed
} 
\section*{Acknowledgments}
{The authors wish to thank the anonymous referees for their useful remarks that helped improve the exposition and allowed to shorten and improve the proof of Lemma~\ref{l:12}.}
The second author wish to thank the organizers of the conference ``Contemporary topics in Conservation Laws" for the invitation to give a talk and present results related to the topic of the present paper.
Both authors are members of the Gruppo Nazionale per l'Analisi Matematica, la Probabilit\`a e le loro Applicazioni (GNAMPA) of the Istituto Nazionale di Alta Matematica (INdAM) and are supported by the PRIN national project ``Nonlinear Hyperbolic Partial Differential E\-qua\-tions, Dispersive and Transport Equations: theoretical and applicative
aspects''. This work was also supported by the EPSRC Science and Innovation award to the OxPDE (EP/E035027/1).


\providecommand{\href}[2]{#2}
\providecommand{\arxiv}[1]{\href{http://arxiv.org/abs/#1}{arXiv:#1}}
\providecommand{\url}[1]{\texttt{#1}}
\providecommand{\urlprefix}{URL }

\end{document}